\newtheorem{thm}{Theorem}
\newtheorem{conj}{Conjecture}
\newtheorem{theorem}{Theorem}[section]
\newtheorem{lemma}[theorem]{Lemma}
\newtheorem{proposition}[theorem]{Proposition}
\newtheorem{corollary}[theorem]{Corollary}
\theoremstyle{definition}
\newtheorem{definition}{Definition}
\newtheorem{example}{Example}
\newtheorem{problem}{Problem}
\theoremstyle{remark}
\numberwithin{equation}{section}
\DeclareMathOperator{\RE}{Re}
\DeclareMathOperator{\IM}{Im}
\newcommand{\ID}{\mathbb{D}}
\newcommand{\IC}{{\mathbb C}}
\newcommand{\dist}{{\operatorname{dist}}}
\newcounter{minutes}\setcounter{minutes}{\time}
\newcounter{hours}\setcounter{hours}{\time}
\begin{document}
\thispagestyle{empty} \setcounter{page}{1}

\title[Some properties of univalent log-harmonic mappings]
{Some properties of univalent log-harmonic mappings}

\thanks{
File:~\jobname .tex,
          printed: \number\day-\number\month-\number\year,
          \thehours.\ifnum\theminutes<10{0}\fi\theminutes}


\author[Z. Liu]{ZhiHong Liu }
\address{Z. Liu\vskip.03in College of Science, Guilin University of Technology,
Guilin 541004, Guangxi, People's Republic of China.
}
\email{\textcolor[rgb]{0.00,0.00,0.84}{liuzhihongmath@163.com}}

\author[S. Ponnusamy]{Saminathan Ponnusamy}
\address{S. Ponnusamy,  Department of Mathematics, Indian Institute of Technology Madras,
Chennai-600 036, India.
}
\email{\textcolor[rgb]{0.00,0.00,0.84}{samy@iitm.ac.in}}


\date{\today}

\subjclass[2010] {Primary: 30C35, 30C45; Secondary: 35Q30}

\keywords{Log-harmonic mappings, log-harmonic starlike mappings of order $\alpha$, coefficients estimate, Bohr's radius, inner radius, log-harmonic Bloch's norm.}

\begin{abstract}
We determine the representation theorem, distortion theorem, coefficients estimate and Bohr's radius for log-harmonic starlike mappings of order $\alpha$,
which are generalization of some earlier results. In addition, the inner mapping radius of log-harmonic mappings is also established by constructing a family of
$1$-slit log-harmonic mappings. Finally, we introduce pre-Schwarzian, Schwarzian derivatives and Bloch's norm for non-vanishing log-harmonic mappings,
several properties related to these  are also obtained.
\end{abstract}
\maketitle

\section{Introduction}

Let $\mathcal{B}$ denote the set of all bounded analytic functions defined on the unit disk $\ID=\{z: |z|<1\}$
satisfying $|\omega (z)|<1$ for all $z\in \mathbb{D}$. Then the differential operators
\begin{equation*}
\frac{\partial}{\partial z}=\frac{1}{2}\left(\frac{\partial}{\partial x}-i \frac{\partial}{\partial y}\right)\quad{\rm and}\quad
\frac{\partial}{\partial \overline{z}}=\frac{1}{2}\left(\frac{\partial}{\partial x}+i \frac{\partial}{\partial y}\right)
\end{equation*}
show that the Laplacian is given by
$$\Delta=4\frac{\partial^{2}}{\partial
z\partial\overline{z}}=\frac{\partial ^2}{\partial x^2} +
\frac{\partial ^2}{\partial y^2}.
$$
Thus a $C^2$-function $f$ defined on the unit disk $\ID$ is said to be harmonic in $\ID$ if $\Delta f=0$ therein.
Analogously, a log-harmonic mapping defined on $\ID$ is a solution of the nonlinear elliptic partial differential equation
\begin{equation*}
\frac{\overline{f_{\overline{z}}}}{\overline{f}}=\mu\frac{f_{z}}{f},
\end{equation*}
for some $\mu \in \mathcal B$, where $\mu$ is called the \textit{second complex-dilatation} of $f$. It follows that the Jacobian
$$J_{f}=|f_z|^2-|f_{\overline{z}}|^2=|f_z|^2(1-|\mu|^2)
$$
is positive and all non-constant log-harmonic mappings are therefore sense-preserving and open in $\ID$. If $f$ does not vanish in $\ID$, then $f$ can be expressed as
\begin{equation*}
f(z)=h(z)\overline{g(z)},
\end{equation*}
where $h$ and $g$ are analytic in $\ID$.
On the other hand, if $f$ is a non-constant log-harmonic mapping that vanishes only at $z=0$, then $f$ admits the representation
\begin{equation}\label{SLH1}
f(z)=z^m|z|^{2\beta m}h(z)\overline{g(z)},
\end{equation}
where $m$ is a non-negative integer, $\RE \beta>-1/2$, $h$ and $g$ are analytic in $\ID$ satisfying $h(0)\neq 0$ and $g(0)=1$
(see \cite{Abdulhadi1988}). We see that $\beta$ in \eqref{SLH1} depends only on $\mu(0)$ and can be expressed as
\begin{equation*}
  \beta=\overline{\mu(0)}\frac{1+\mu(0)}{1-|\mu(0)|^2}.
\end{equation*}

Note that $f(0)\neq 0$ if and only if $m=0$, and that a univalent log-harmonic mapping in $\ID$ vanishes at the origin if and only if $m=1$.
In other words, every univalent log-harmonic mapping in $\ID$ which vanishes at the origin has the form
\begin{equation*}
f(z)=z|z|^{2\beta}h(z)\overline{g(z)},
\end{equation*}
where $\RE \beta>-1/2$ and $0\not\in (hg)(\ID)$. The class of such functions has been widely studied.
See for example \cite{Abdulhadi1987,Abdulhadi1988,Abdulhadi1993,Abdulhadi1996}.

In this paper, our emphasis is primarily on sense-preserving univalent log-harmonic mappings in $\ID$ with $\mu(0)=0$. These mappings have the form
\begin{equation}\label{SLH2}
f(z)=zh(z)\overline{g(z)},
\end{equation}
where $h$ and $g$ are analytic in $\ID$ such that
\begin{equation}\label{LHBJC}
h(z)=\exp\left(\sum_{n=1}^{\infty}a_{n}z^n\right)~\mbox{ and }~g(z)=\exp\left(\sum_{n=1}^{\infty}b_{n}z^n\right).
\end{equation}
Here $h(z)$ and $g(z)$ may be called as analytic and co-analytic factors of $f(z)$.
Denote by $\mathcal{S}_{Lh}$ the class which consists of all such mappings.

It follows from ~\eqref{SLH2} that the functions $h,g$ and the dilatation $\mu$ satisfy
\begin{equation}\label{LHD}
\mu(z)=\frac{zg'(z)/g(z)}{1+zh'(z)/h(z)}=\frac{z\left(\log g \right)'(z)}{1+z\left(\log h \right)'(z)}.
\end{equation}

We say that a univalent log-harmonic mapping $f$ of the form \eqref{SLH2} is log-harmonic starlike mapping of order $\alpha$, denoted by
$f\in\mathcal{S}^{*}_{Lh}(\alpha)$, if
\begin{equation*}
\frac{\partial}{\partial\theta} \left(\arg f(re^{i\theta})\right)
=\RE\left(\frac{Df(z)}{f(z)}\right)
=\RE\left(\frac{zf_z(z)-\overline{z}f_{\overline{z}}(z)}{f(z)}\right)>\alpha,
\end{equation*}
for all $z=re^{i\theta}\in\ID\backslash\{0\}$ and for some $0\leq\alpha< 1$. If $\alpha=0$,
then we get the class of log-harmonic starlike mappings, $\mathcal{S}^{*}_{Lh}(0)=:\mathcal{S}^{*}_{Lh}$.
If $f$ is analytic in $\ID$, then denote by $\mathcal{S}^{*}(\alpha)$ the class of analytic starlike function
of order $\alpha$, and $\mathcal{S}^{*}(0)=:\mathcal{S}^{*}$.

The following theorem establishes a link between the classes $\mathcal{S}^{*}_{Lh}(\alpha)$ and $\mathcal{S}^{*}(\alpha)$.

\begin{thm}\label{thmLSS}
{\rm(\cite[Lemma 2.4]{Abdulhadi1987} and~\cite[Theorem 2.1]{Abdulhadi2006})}
Let $f(z)=zh(z)\overline{g(z)}$ be a log-harmonic mapping on $\ID$, $0\not\in (hg)(\ID)$. Then $f\in\mathcal{S}^{*}_{Lh}(\alpha)$ if and only if
$\varphi\in\mathcal{S}^{*}(\alpha)$, where $\varphi(z)=zh(z)/g(z)$.
\end{thm}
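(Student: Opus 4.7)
The plan is to prove the equivalence by directly computing both $Df/f$ and $z\varphi'/\varphi$ and comparing their real parts. From $f(z)=zh(z)\overline{g(z)}$, I would compute
\[
f_z = (h(z)+zh'(z))\overline{g(z)}, \qquad f_{\overline{z}} = zh(z)\overline{g'(z)},
\]
and hence
\[
\frac{Df(z)}{f(z)} \;=\; \frac{zf_z-\overline{z}f_{\overline{z}}}{f}
 \;=\; 1+\frac{zh'(z)}{h(z)}-\overline{\left(\frac{zg'(z)}{g(z)}\right)}.
\]

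Next, for $\varphi(z)=zh(z)/g(z)$ I would carry out the logarithmic-derivative computation to get
\[
\frac{z\varphi'(z)}{\varphi(z)} \;=\; 1+\frac{zh'(z)}{h(z)}-\frac{zg'(z)}{g(z)}.
\]
Since $\RE(\overline{w})=\RE(w)$ for any $w\in\IC$, the two expressions have identical real parts:
\[
\RE\!\left(\frac{Df(z)}{f(z)}\right) \;=\; \RE\!\left(\frac{z\varphi'(z)}{\varphi(z)}\right).
\]
This single identity is the heart of the argument; everything else is bookkeeping.

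Before concluding, I would verify that $\varphi$ is a properly normalized analytic function on $\ID$. Using the representation \eqref{LHBJC}, one has $h(0)=g(0)=1$, so $\varphi(0)=0$ and $\varphi'(0)=h(0)/g(0)=1$. Moreover $h$ and $g$ are non-vanishing exponentials, so $\varphi(z)/z=h(z)/g(z)$ is a non-vanishing analytic function on $\ID$. Hence $\varphi$ belongs to the usual class of normalized analytic functions on which $\mathcal{S}^{*}(\alpha)$ is defined, and the condition $\RE(z\varphi'(z)/\varphi(z))>\alpha$ for all $z\in\ID$ is exactly the definition of $\varphi\in\mathcal{S}^{*}(\alpha)$.

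Putting these together yields the equivalence in both directions: if $f\in\mathcal{S}^{*}_{Lh}(\alpha)$, the displayed identity forces $\RE(z\varphi'/\varphi)>\alpha$ on $\ID\setminus\{0\}$, which extends to $z=0$ by continuity with value $1>\alpha$, so $\varphi\in\mathcal{S}^{*}(\alpha)$; conversely, starting from $\varphi\in\mathcal{S}^{*}(\alpha)$, the same identity gives $\RE(Df(z)/f(z))>\alpha$ on $\ID\setminus\{0\}$, and together with the univalence and sense-preserving property of $f$ (guaranteed by $0\notin(hg)(\ID)$ and the definition of $\mathcal{S}_{Lh}$), this places $f$ in $\mathcal{S}^{*}_{Lh}(\alpha)$. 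I do not anticipate a genuine obstacle here; the only subtlety is recognizing that the conjugate in the $\overline{zg'/g}$ term is harmless once one takes real parts, which is precisely why the log-harmonic starlikeness condition can be tested via the purely analytic companion $\varphi$.
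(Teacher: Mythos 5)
Your computation of the identity
\[
\RE\left(\frac{Df(z)}{f(z)}\right)=\RE\left(1+\frac{zh'(z)}{h(z)}-\overline{\left(\frac{zg'(z)}{g(z)}\right)}\right)
=\RE\left(\frac{z\varphi'(z)}{\varphi(z)}\right)
\]
is correct, and it is indeed the computational core of the equivalence; note that the paper itself states this theorem without proof, quoting it from Abdulhadi--Hengartner and Abdulhadi--Abu Muhanna, so the comparison is with the argument in those sources. Your forward direction is essentially complete, modulo explicitly citing the classical fact that a normalized analytic $\varphi$ with $\RE(z\varphi'(z)/\varphi(z))>\alpha\geq 0$ is automatically univalent (starlikeness implies univalence); the real-part condition alone is not literally "the definition" of $\mathcal{S}^{*}(\alpha)$ if that class is taken, as usual, to consist of univalent functions.

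The genuine gap is in the converse direction. You write that the univalence of $f$ is "guaranteed by $0\notin(hg)(\ID)$ and the definition of $\mathcal{S}_{Lh}$," but this is circular: the theorem's hypothesis only assumes $f=zh\overline{g}$ is a (sense-preserving) log-harmonic mapping with $hg$ non-vanishing, and the paper's class $\mathcal{S}^{*}_{Lh}(\alpha)$ consists by definition of \emph{univalent} log-harmonic mappings, so univalence of $f$ is part of what must be proved, not part of what is given. Non-univalent log-harmonic mappings with $0\notin(hg)(\ID)$ abound, so the real-part inequality alone does not place $f$ in $\mathcal{S}^{*}_{Lh}(\alpha)$ without an argument. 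The missing step, which is the actual content of the cited Lemma 2.4, runs as follows: since $zh'/h$ and $zg'/g$ vanish at the origin, the mean value of $Df/f$ over each circle $|z|=r$ equals $1$, so $\arg f(re^{i\theta})$ is strictly increasing in $\theta$ (because $\RE(Df/f)>\alpha\geq 0$) with total increase exactly $2\pi$; hence $f$ maps each circle $|z|=r$ injectively onto a Jordan curve winding once about the origin, and the argument principle (in its degree-theoretic form valid for sense-preserving open mappings) shows that every value enclosed by that curve is attained exactly once in $|z|<r$. Letting $r\to 1^{-}$ yields univalence of $f$ on $\ID$. Without this winding-number step your "if" direction establishes only the differential inequality, not membership in $\mathcal{S}^{*}_{Lh}(\alpha)$.
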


In \cite{Abdulhadi2006},   Abdulhadi and  Abumuhanna obtained the following representation theorem and distortion theorem for functions in $\mathcal{S}^{*}_{Lh}(\alpha)$.
\begin{thm}\label{thmfexpr}
{\rm(\cite[Theorem 2.2]{Abdulhadi2006})}
$f(z)=zh(z)\overline{g(z)}\in\mathcal{S}^{*}_{Lh}(\alpha)$ with $\mu(0)=0$ if and only if there are two probability measures $\delta$ and $\kappa$ such that
\begin{equation*}
f(z)=z\exp\left(\int_{\partial\ID}\int_{\partial\ID}K(z,\eta,\xi)d\delta(\eta)d\kappa(\xi)\right),
\end{equation*}
where
\begin{equation*}
K(z,\eta,\xi)=(1-\alpha)\log\left(\frac{1+\overline{\xi z}}{1-\eta z}\right)+T(z,\eta,\xi).
\end{equation*}
Here
\begin{small}
\begin{equation*}
T(z,\eta,\xi)=\left\{
\begin{aligned}
         &-2(1-\alpha)\IM\left(\frac{\eta+\xi}{\eta-\xi}\right)
         \arg\left(\frac{1-\xi z}{1-\eta z}\right)-2\alpha\log|1-\xi z|&{\rm if}\, |\eta|=|\xi|=1, \eta\neq \xi, \\
         &(1-\alpha)\RE\left(\frac{4\eta z}{1-\eta z}\right)-2\alpha\log|1-\eta z| &{\rm if}\, |\eta|=|\xi|=1, \eta= \xi.
\end{aligned}
\right.
\end{equation*}
\end{small}
\end{thm}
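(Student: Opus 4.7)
The plan is to combine Theorem~\ref{thmLSS} with two classical Herglotz-type integral representations to produce the double integral. By Theorem~\ref{thmLSS}, $f\in\mathcal{S}^{*}_{Lh}(\alpha)$ if and only if $\varphi(z):=zh(z)/g(z)\in\mathcal{S}^{*}(\alpha)$. Since $\varphi\in\mathcal{S}^{*}(\alpha)$, the function
\[
p_1(z):=\frac{1}{1-\alpha}\left(\frac{z\varphi'(z)}{\varphi(z)}-\alpha\right)
\]
is holomorphic in $\ID$ with positive real part and $p_1(0)=1$. The Herglotz representation yields a probability measure $\delta$ on $\partial\ID$ with $p_1(z)=\int\frac{1+\eta z}{1-\eta z}\,d\delta(\eta)$, and integrating $\varphi'/\varphi-1/z$ gives
\[
\log\frac{h(z)}{g(z)}=-2(1-\alpha)\int_{\partial\ID}\log(1-\eta z)\,d\delta(\eta).
\]
Separately, since the dilatation satisfies $\mu\in\mathcal{B}$ with $\mu(0)=0$, the function $(1+\mu)/(1-\mu)$ is holomorphic with positive real part and value $1$ at the origin, so a second application of Herglotz produces a probability measure $\kappa$ on $\partial\ID$ with
\[
\frac{1+\mu(z)}{1-\mu(z)}=\int_{\partial\ID}\frac{1+\xi z}{1-\xi z}\,d\kappa(\xi).
\]

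Using \eqref{LHD} together with the identity $z(\log h)'-z(\log g)'=z\varphi'/\varphi-1=:R(z)$, one can solve algebraically for the logarithmic derivatives of $h$ and $g$:
\[
z(\log h)'(z)=\frac{R(z)+\mu(z)}{1-\mu(z)},\qquad z(\log g)'(z)=\frac{\mu(z)(1+R(z))}{1-\mu(z)}.
\]
Substituting the two Herglotz representations, dividing by $z$, integrating from $0$ to $z$, and then forming $\log h(z)+\overline{\log g(z)}=\log(f(z)/z)$ produces the asserted double integral. The partial-fraction decomposition
\[
\frac{1}{(1-\eta z)(1-\xi z)}=\frac{1}{\eta-\xi}\left(\frac{\eta}{1-\eta z}-\frac{\xi}{1-\xi z}\right)\quad(\eta\neq\xi),
\]
together with its confluent limit for $\eta=\xi$, is precisely what distinguishes the two cases in the definition of $T(z,\eta,\xi)$.

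The main obstacle is the explicit algebraic matching of the kernel $K$. Verifying the coefficient $\IM\frac{\eta+\xi}{\eta-\xi}$ of the $\arg\frac{1-\xi z}{1-\eta z}$ term requires conjugating the coefficients arising in the expression for $\log g$ and combining them with those from $\log h$, using $|\eta|=|\xi|=1$ to simplify $\bar\eta/(\bar\eta-\bar\xi)=-\xi/(\eta-\xi)$ and $\bar\xi/(\bar\eta-\bar\xi)=-\eta/(\eta-\xi)$; the imaginary part emerges because $(\eta+\xi)/(\eta-\xi)$ is purely imaginary on the bi-torus. The degenerate case $\eta=\xi$ is obtained as an L'H\^opital-type limit that collapses the partial fractions into a second-order term, producing $(1-\alpha)\RE\frac{4\eta z}{1-\eta z}$, while the $-2\alpha\log|1-\xi z|$ piece is the common contribution of the real parts of the conjugate-paired logarithms.

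For the converse direction, one reverses the procedure: given probability measures $\delta,\kappa$, read $h$ and $g$ off the analytic and anti-analytic parts of the exponent, define $\varphi=zh/g$ and compute $z\varphi'/\varphi$ directly from the representation of $\delta$ to see that $\RE(z\varphi'/\varphi)>\alpha$, so $\varphi\in\mathcal{S}^{*}(\alpha)$ and Theorem~\ref{thmLSS} applies; check simultaneously that $\mu$ defined via \eqref{LHD} coincides with the Schwarz function associated to $\kappa$, hence lies in $\mathcal{B}$ with $\mu(0)=0$.
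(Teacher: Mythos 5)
Your strategy is essentially the paper's own machinery. The paper does not reprove Theorem~\ref{thmfexpr} (it is cited from \cite{Abdulhadi2006}), but its Section~\ref{sect2} representation theorem for the factors $h$ and $g$ is proved by exactly the steps you propose: Theorem~\ref{thmLSS}, the Herglotz representation for $\frac{1}{1-\alpha}\left(\frac{z\varphi'}{\varphi}-\alpha\right)$, Lemma~\ref{lemmu} for $\frac{\mu}{1-\mu}$, and termwise integration with the partial-fraction dichotomy $\eta\neq\xi$ versus $\eta=\xi$. Your identity $z(\log g)'=\frac{\mu(1+R)}{1-\mu}$ is precisely the paper's $(\log g)'=\frac{\mu}{1-\mu}\cdot\frac{\varphi'}{\varphi}$ in \eqref{Exprg}, your formula $z(\log h)'=\frac{R+\mu}{1-\mu}$ is correct, and your kernel is $K_1+\overline{K_2}$ in the paper's notation; the conjugation identities $\overline{\eta}/(\overline{\eta}-\overline{\xi})=-\xi/(\eta-\xi)$ and the pure imaginarity of $(\eta+\xi)/(\eta-\xi)$ on the torus are all correct, as is the confluent limit for $\eta=\xi$.

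However, the one step you explicitly deferred --- ``the explicit algebraic matching of the kernel $K$'' --- is exactly where the argument as written cannot close, because the statement's kernel is misprinted. Carrying out your own recipe, the $T(z,\eta,\xi)$ part matches perfectly, but the leftover logarithms combine (using $2\alpha\log|1-\eta z|=\alpha\log(1-\eta z)+\alpha\overline{\log(1-\eta z)}$) into $(\alpha-1)\log(1-\eta z)+(1-\alpha)\overline{\log(1-\eta z)}=(1-\alpha)\log\left(\frac{1-\overline{\eta z}}{1-\eta z}\right)$, \emph{not} $(1-\alpha)\log\left(\frac{1+\overline{\xi z}}{1-\eta z}\right)$; no reparametrization of $\kappa$ repairs this, since the anti-analytic term involves $\eta$, not $\xi$. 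A Dirac-measure sanity check confirms it: at $\delta=\kappa=$ point mass at $\eta=\xi=1$, the corrected kernel yields exactly $f_{\alpha}$ of \eqref{Dfalpha}, whereas the printed kernel yields $z\left(\frac{1+\overline{z}}{1-z}\right)^{1-\alpha}|1-z|^{-2\alpha}\exp\left((1-\alpha)\RE\frac{4z}{1-z}\right)$, whose dilatation at $\alpha=0$ is $\mu(z)=\frac{z(3+z^2)}{(1+z)^2}$ with $|\mu(0.9i)|\approx1.09>1$ --- not even sense-preserving, so it cannot be an extreme point of $\mathcal{S}^{*}_{Lh}(\alpha)$. So your claim that the procedure ``produces the asserted double integral'' is false for the kernel as printed: you must either carry out the matching and state the corrected first term $(1-\alpha)\log\left(\frac{1-\overline{\eta z}}{1-\eta z}\right)$, or flag the misprint inherited from the source. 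Your converse sketch is otherwise sound; just note explicitly that $\RE\int_{\partial\ID}\frac{\xi z}{1-\xi z}\,d\kappa(\xi)>-\frac{1}{2}$ is what guarantees $|\mu|<1$ before invoking Theorem~\ref{thmLSS}.
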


\begin{thm}\label{fDthm}
{\rm(\cite[Theorem 3.1]{Abdulhadi2006})}
Let $f(z)=zh(z)\overline{g(z)}\in\mathcal{S}^{*}_{Lh}(\alpha)$ with $\mu(0)=0$. Then for $z\in \ID$ we have
\begin{equation*}
\frac{|z|}{(1+|z|)^{2\alpha}}\exp\left((1-\alpha)\frac{-4|z|}{1+|z|}\right)\leq|f(z)|\leq \frac{|z|}{(1-|z|)^{2\alpha}}\exp\left((1-\alpha)\frac{4|z|}{1-|z|}\right).
\end{equation*}
The equalities occur if and only if $f(z)$ is one of the functions of the form $\overline{\eta} f_{\alpha}(\eta z), |\eta|=1$, where $f_{\alpha}(z)$ is given by
\begin{equation}\label{Dfalpha}
f_{\alpha}(z)=\frac{z}{1-z}\frac{1}{(1-\overline{z})^{2\alpha-1}}
\exp\left((1-\alpha)\RE\left(\frac{4z}{1-z}\right)\right).
\end{equation}
\end{thm}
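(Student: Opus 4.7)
The plan is to reduce the problem to the classical distortion for $\mathcal{S}^*(\alpha)$ via Theorem~\ref{thmLSS} and then estimate the co-analytic factor $g$.

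By Theorem~\ref{thmLSS}, the analytic function $\varphi(z):=zh(z)/g(z)$ lies in $\mathcal{S}^*(\alpha)$, so the classical sharp distortion theorem for starlike functions of order $\alpha$ gives
$$
\frac{r}{(1+r)^{2(1-\alpha)}}\le |\varphi(z)|\le \frac{r}{(1-r)^{2(1-\alpha)}},\qquad r=|z|.
$$
Since $|\overline g|=|g|$, the factorization $|f(z)|=|\varphi(z)|\,|g(z)|^2$ reduces the task to a bound on $|g|$. Eliminating $h$ from \eqref{LHD} in favor of $\varphi$ and $\mu$ yields
$$
\frac{zg'(z)}{g(z)}=\frac{\mu(z)\,p(z)}{1-\mu(z)},\qquad p(z):=\frac{z\varphi'(z)}{\varphi(z)}.
$$
Schwarz's lemma (using $\mu(0)=0$) gives $|\mu(z)|\le|z|$, and the Carath\'eodory-type estimate for $\RE p>\alpha$, $p(0)=1$, gives $|p(z)|\le (1+(1-2\alpha)r)/(1-r)$; hence $|zg'/g|\le r(1+(1-2\alpha)r)/(1-r)^2$. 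Integrating $\RE(zg'/g)/s$ along the radial segment $\zeta=se^{i\theta}$, $s\in[0,r]$, gives
$$
\bigl|\log|g(z)|\bigr|\le (1-2\alpha)\log(1-r)+\frac{2(1-\alpha)r}{1-r}.
$$
Exponentiating the upper direction and multiplying by the classical upper bound on $|\varphi|$ produces the asserted upper bound, since the exponents of $(1-r)$ collapse cleanly to $-2\alpha$.

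The \emph{main obstacle} is the lower bound: a term-by-term multiplication of the lower bound on $|\varphi|$ with the corresponding lower bound on $|g|^2$ produces a factor $(1+r)^{-2(1-\alpha)}(1-r)^{-2(1-2\alpha)}\exp(-4(1-\alpha)r/(1-r))$, which does not simplify to the asserted $r(1+r)^{-2\alpha}\exp(-4(1-\alpha)r/(1+r))$. To recover the sharp form, I would combine the two integrations into the single radial identity
$$
\log\frac{|f(z)|}{r}=\int_0^r\frac{\RE[\Phi(\zeta)-1]}{s}\,ds,\qquad \Phi:=p\cdot\frac{1+\mu}{1-\mu},
$$
and pointwise analyze $\RE\Phi(\zeta)$ using the two Carath\'eodory disks that independently constrain $p(\zeta)$ (order $\alpha$) and $(1+\mu(\zeta))/(1-\mu(\zeta))$ (order $0$) at each $\zeta$ on the ray; integrating the resulting extremal values of $\RE\Phi$ reproduces both stated one-sided bounds. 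For equality, a direct computation shows that $f_\alpha$ corresponds to $\varphi_\alpha(z)=z/(1-z)^{2(1-\alpha)}$ (the Koebe function of order $\alpha$) and $\mu_\alpha(z)=z$, which simultaneously saturate the Schwarz and Carath\'eodory estimates; rotational covariance of $\mathcal{S}^*_{Lh}(\alpha)$ then yields the full family $\overline\eta f_\alpha(\eta z)$.
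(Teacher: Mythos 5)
Your reduction and the upper half of the argument are sound: $|f|=|\varphi|\,|g|^2$, the identity $zg'(z)/g(z)=\mu(z)p(z)/(1-\mu(z))$, the integrated estimate for $|g|$ (which matches the paper's own upper-bound chain in Theorem \ref{thmSLHA}), and the radial identity
\begin{equation*}
\log\frac{|f(z)|}{r}=\int_0^r\frac{\RE\left[\Phi(se^{i\theta})-1\right]}{s}\,ds,\qquad \Phi=p\cdot\frac{1+\mu}{1-\mu},
\end{equation*}
all check out, since $1+zh'/h=p/(1-\mu)$ gives $\RE\left((zf_z+\overline{z}f_{\overline{z}})/f\right)=\RE(pq)$ with $q=(1+\mu)/(1-\mu)$. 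The genuine gap is your final claim that integrating the pointwise extremal values of $\RE(pq)$ over the two value disks reproduces the sharp \emph{lower} bound. It does not. At $|\zeta|=s$ the value regions are the full disks $|p-c_1|\le\rho_1$ and $|q-C|\le R$ with $c_1=\frac{1+(1-2\alpha)s^2}{1-s^2}$, $\rho_1=\frac{2(1-\alpha)s}{1-s^2}$, $C=\frac{1+s^2}{1-s^2}$, $R=\frac{2s}{1-s^2}$, and since $\varphi$ and $\mu$ are independent parameters these values are independently attainable. Minimizing first in $p$ gives $c_1\RE q-\rho_1|q|$; writing $q=C+Re^{i\psi}$, the derivative of this in $t=\cos\psi$ is $R\left(c_1-\rho_1C/|q|\right)$, so whenever $c_1(C-R)<\rho_1C$ the minimum is attained \emph{off the real axis}, at $|q|=\rho_1C/c_1$, with value $c_1C-\frac{c_1(C^2+R^2)}{2C}-\frac{\rho_1^2C}{2c_1}$, strictly below the extremal function's pointwise value $(c_1-\rho_1)(C-R)=\frac{(1-(1-2\alpha)s)(1-s)}{(1+s)^2}$, which is what your scheme would need the minimum to equal. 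For $\alpha=0$ the condition reads $C<2R$, i.e.\ $s>2-\sqrt{3}$, where the true pointwise minimum is $\frac{C^2}{2}-R^2=(C-R)^2-\frac{1}{2}(C-2R)^2<(C-R)^2$; for every $\alpha\in[0,1)$ the same degeneration occurs for $s$ close to $1$, since $c_1(C-R)$ stays bounded while $\rho_1C\to\infty$.

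Consequently your method yields a correct but strictly weaker lower bound than the asserted $\frac{r}{(1+r)^{2\alpha}}\exp\left(-\frac{4(1-\alpha)r}{1+r}\right)$ once $r$ passes the threshold, so it does not prove the theorem. The obstruction is structural: the pointwise-minimal pairs $(p(\zeta),q(\zeta))$ at different radii cannot be realized by a single admissible pair $(\varphi,\mu)$ — Schwarz--Pick rigidity forces compensation along the ray — so the sharp lower bound is a global, not pointwise, phenomenon. This is exactly why the source proof (the paper cites \cite[Theorem 3.1]{Abdulhadi2006} for Theorem \ref{fDthm} and does not reprove it) goes through the representation theorem: one writes $\log(f(z)/z)$ as a double integral of the explicit kernel $K(z,\eta,\xi)$ of Theorem \ref{thmfexpr} against probability measures, reduces to Dirac measures, and minimizes the \emph{integrated} kernel $\RE K(z,\eta,\xi)$ jointly in $(\eta,\xi)$ — the analysis via the function $\Phi_r(l)$, $e^{2il}=\overline{\eta}\xi$, that the paper carries out for the analogous lower bounds on $|h|$ and $|g|$ in Theorem \ref{thmSLHA}, following \cite{Ali2016}. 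Your identification of the extremal data $\varphi_\alpha(z)=z/(1-z)^{2(1-\alpha)}$, $\mu(z)=z$ is consistent with that route, but by itself it only verifies that equality holds for $f_\alpha$, not that the stated lower bound holds for all $f$.
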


The  paper is organized as follows. In Section \ref{sect2}, we present the representation theorem for the analytic and the co-analytic
products of $\mathcal{S}^{*}_{Lh}(\alpha)$ and use them to  derive distortion theorems, coefficients estimates and Bohr's radius.
   In Section \ref{sect4},
the inner mapping radius of log-harmonic mappings is established by constructing a family of 1-slit log-harmonic mappings
and propose a problem of the inner mapping radius for log-harmonic mappings. In Section \ref{sect3}, we introduce
pre-Schwarzian, Schwarzian derivatives and log-harmonic Bloch mappings and 
in Section \ref{sect6}, we continue to discuss the log-harmonic Bloch space $\mathcal{B}_{Lh}$ of non-vanishing log-harmonic mappings.

\section{Coefficients estimate}\label{sect2}



In order to prove our main results, we shall need the following lemmas.

\begin{lemma}{\rm(\cite[Corollary 3.6]{Hallenbeck1984})}\label{lempm}
Let $p(z)$ be analytic in $\mathbb{D}$ with $p(0)=1$. Then $\RE p(z)>0$ in $\mathbb{D}$ if and only if
there is a probability measure $\delta$ on $\partial\mathbb{D}$ such that
\begin{equation*}
p(z)=\int_{\partial\ID}\frac{1+\eta z}{1-\eta z}\,d\delta(\eta),\quad z\in \ID.
\end{equation*}
\end{lemma}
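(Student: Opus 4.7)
My plan is to prove the two implications separately, with the forward direction being the substantive one. The backward direction is computational: assuming the stated integral representation, taking real parts via the Poisson-kernel identity
$$
\RE\!\left(\frac{1+\eta z}{1-\eta z}\right) = \frac{1-|z|^2}{|1-\eta z|^2}
$$
gives $\RE p(z) = \int_{\partial\ID}\frac{1-|z|^2}{|1-\eta z|^2}\,d\delta(\eta) > 0$ on $\ID$, since the kernel is strictly positive and $\delta$ is a positive measure; evaluating at $z=0$ yields $p(0)=\int_{\partial\ID}d\delta = 1$ because $\delta$ is a probability measure.

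For the forward direction I would approximate $p$ by its dilates $p_r(z):=p(rz)$, $r<1$, each of which extends analytically to a neighbourhood of $\overline{\ID}$ and still satisfies $\RE p_r > 0$ and $p_r(0)=1$. The classical Schwarz--Herglotz formula applied to $p_r$ gives
$$
p_r(z) = i\,\IM p_r(0) + \int_0^{2\pi} \frac{e^{i\theta}+z}{e^{i\theta}-z}\,\RE p(re^{i\theta})\,\frac{d\theta}{2\pi},
$$
and since $p(0)=1$ is real, the imaginary constant vanishes. Using the identity $\frac{e^{i\theta}+z}{e^{i\theta}-z} = \frac{1+\eta z}{1-\eta z}$ under the reparametrisation $\eta=e^{-i\theta}$, I define the finite positive Borel measure $d\delta_r(\eta) := \RE p(re^{i\theta})\,\frac{d\theta}{2\pi}$ on $\partial\ID$. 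Its total mass equals $\RE p_r(0)=1$, so each $\delta_r$ is a probability measure on $\partial\ID$.

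The remaining step is to pass to the limit $r\to 1^-$. The set of probability measures on $\partial\ID$ sits inside the unit ball of $C(\partial\ID)^\ast$, and since $C(\partial\ID)$ is separable this ball is weak-$\ast$ sequentially compact by Banach--Alaoglu. Extracting a subsequence $\delta_{r_n}$ converging weak-$\ast$ to some probability measure $\delta$, and noting that for each fixed $z\in\ID$ the kernel $\eta\mapsto (1+\eta z)/(1-\eta z)$ is continuous on $\partial\ID$, I obtain
$$
p(z) = \lim_{n\to\infty} p_{r_n}(z) = \lim_{n\to\infty}\int_{\partial\ID}\frac{1+\eta z}{1-\eta z}\,d\delta_{r_n}(\eta) = \int_{\partial\ID}\frac{1+\eta z}{1-\eta z}\,d\delta(\eta),
$$
which is the desired representation.

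The main obstacle is arranging this limit passage uniformly in $z$: one must select a \emph{single} subsequence of radii that works simultaneously for every $z\in\ID$. This is precisely what weak-$\ast$ convergence delivers, since by definition it tests against every continuous function on $\partial\ID$ at once, rather than requiring a separate diagonal argument for each $z$. A secondary bookkeeping point is the reparametrisation $\eta = e^{-i\theta}$ used to identify the integral on $[0,2\pi]$ with an integral over $\partial\ID$; care is needed so that orientation and the direction of the kernel match, but no new analytic content enters.
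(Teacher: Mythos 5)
Your proof is correct. The paper offers no proof of this lemma at all --- it is quoted from Hallenbeck and MacGregor (Corollary 3.6) as the classical Herglotz representation theorem --- and your argument (the Poisson-kernel positivity computation for the sufficiency, and dilates $p_r$, the Schwarz integral formula, and weak-$\ast$ sequential compactness of probability measures on $\partial\ID$ for the necessity) is exactly the standard proof of that result, so you have simply supplied the argument that the paper delegates to the literature; the only point worth noting is that your weak-$\ast$ limit step implicitly uses that positivity and total mass $1$ pass to the limit (test against nonnegative continuous functions and against the constant $1$), which is routine but should be said when writing it up.
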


Since each $p$ has the form
$$ p(z)=\frac{1+\mu(z)}{1-\mu(z)}=1+\frac{2\mu(z)}{1-\mu(z)}
$$
for some $\mu\in \mathcal{B}$,  we have the following equivalent version of Lemma \ref{lempm}.

\begin{lemma}
\label{lemmu}
If $\mu\in\mathcal{B}$ with $\mu(0)=0$, then
\begin{equation*}
\frac{\mu(z)}{1-\mu(z)}=\int_{\partial\ID}\frac{\xi z}{1-\xi z}\,d\kappa(\xi),\quad z\in\ID,
\end{equation*}
for some probability measure $\kappa$ on $\partial\ID$.
\end{lemma}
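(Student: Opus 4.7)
The plan is to deduce Lemma \ref{lemmu} directly from Lemma \ref{lempm} by an algebraic rearrangement, exactly as the remark preceding the statement suggests. The key observation is the bijection between Schur functions vanishing at the origin and Carath\'eodory functions normalized at $1$.

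First I would set $p(z) = \dfrac{1+\mu(z)}{1-\mu(z)}$. Since $|\mu(z)| < 1$ on $\ID$, the denominator is non-zero and the image of $p$ lies in the right half-plane, so $\RE p(z) > 0$ on $\ID$. Since $\mu(0) = 0$, we also have $p(0) = 1$, so $p$ satisfies the hypotheses of Lemma \ref{lempm}.

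Next I would invoke Lemma \ref{lempm} to obtain a probability measure $\delta$ on $\partial \ID$ with
\[
p(z) = \int_{\partial \ID} \frac{1+\eta z}{1 - \eta z}\, d\delta(\eta).
\]
The bridge to the target formula is the identity
\[
p(z) - 1 = \frac{2\mu(z)}{1-\mu(z)} \qquad \text{and} \qquad \frac{1+\eta z}{1-\eta z} - 1 = \frac{2\eta z}{1-\eta z}.
\]
Subtracting $1 = \int_{\partial \ID} d\delta(\eta)$ from both sides of the integral representation, then dividing by $2$, yields
\[
\frac{\mu(z)}{1-\mu(z)} = \int_{\partial \ID} \frac{\eta z}{1-\eta z}\, d\delta(\eta),
\]
and relabeling $\eta \leftrightarrow \xi$ and $\delta \leftrightarrow \kappa$ gives the conclusion.

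There is essentially no obstacle here; the content is entirely contained in Lemma \ref{lempm}. The only point that deserves explicit comment is that the correspondence $\mu \leftrightarrow p$ is one-to-one between $\{\mu \in \mathcal{B} : \mu(0) = 0\}$ and $\{p \text{ analytic}: p(0) = 1, \RE p > 0\}$, so Lemma \ref{lempm} and Lemma \ref{lemmu} are genuinely equivalent, not merely one-way implications.
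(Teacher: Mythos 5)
Your proof is correct and follows exactly the route the paper intends: the paper states Lemma \ref{lemmu} as an ``equivalent version'' of Lemma \ref{lempm} via the substitution $p=(1+\mu)/(1-\mu)=1+2\mu/(1-\mu)$, leaving implicit precisely the rearrangement you carry out (subtracting $1=\int_{\partial\ID}d\delta(\eta)$ and dividing by $2$). Your added remark that the correspondence $\mu\leftrightarrow p$ is a bijection, so the two lemmas are genuinely equivalent, matches the paper's framing.
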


\begin{theorem}
A log-harmonic mapping $f(z)=zh(z)\overline{g(z)}\in \mathcal{S}^{*}_{Lh}(\alpha)$ if and only if there are two probability measures $\delta$ and
$\kappa$ on $\partial\ID$ such that
\begin{equation}\label{eqh}
h(z)=\exp\left(\int_{\partial\ID}\int_{\partial\ID}K_1(z,\eta,\xi)\,d\delta(\eta)\,d\kappa(\xi)\right),
\end{equation}
where
\begin{equation}\label{eqh1}
K_1(z,\eta,\xi)=\left\{
\begin{aligned}
         & \left(\frac{(1-2\alpha)\eta+\xi}{\eta-\xi}
         \log\left(\frac{1-\xi z}{1-\eta z}\right)-\log(1-\eta z)\right) &{\rm if}\, |\eta|=|\xi|=1, \eta\neq \xi, \\
         &\frac{2(1-\alpha)\eta z}{1-\eta z}-\log(1-\eta z) &{\rm if}\, |\eta|=|\xi|=1, \eta= \xi,
\end{aligned}
\right.
\end{equation}
and
\begin{equation}\label{eqg}
g(z)=\exp\left(\int_{\partial\ID}\int_{\partial\ID}K_2(z,\eta,\xi)\,d\delta(\eta)\,d\kappa(\xi)\right),
\end{equation}
where
\begin{equation*}
K_2(z,\eta,\xi)= \left\{
\begin{aligned}
         & \frac{(1-2\alpha)\eta+\xi}{\eta-\xi}
         \log\left(\frac{1-\xi z}{1-\eta z}\right)+(1-2\alpha)\log(1-\eta z)  &{\rm if}\,\eta\neq\xi, \\
         & \frac{2(1-\alpha)\eta z}{1-\eta z}+(1-2\alpha)\log(1-\eta z)  &{\rm if}\, \eta=\xi.
\end{aligned}
\right.
\end{equation*}
\end{theorem}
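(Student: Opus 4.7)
The plan is to extract from $f\in\mathcal{S}^{*}_{Lh}(\alpha)$ two probability measures, one coming from the starlikeness of $\varphi(z):=zh(z)/g(z)$ and the other from boundedness of the dilatation $\mu$, and then to solve a small linear system in $z(\log h)'$ and $z(\log g)'$ to recover $h$ and $g$ individually. By Theorem \ref{thmLSS}, $f\in\mathcal{S}^{*}_{Lh}(\alpha)$ if and only if $\varphi\in\mathcal{S}^{*}(\alpha)$, i.e.\ $\RE P>\alpha$ on $\ID$, where $P(z):=z\varphi'(z)/\varphi(z)=1+z(\log h)'(z)-z(\log g)'(z)$. Applying Lemma~\ref{lempm} to $(P-\alpha)/(1-\alpha)$ gives a probability measure $\delta$ on $\partial\ID$ with
\begin{equation*}
P(z)=1+2(1-\alpha)\int_{\partial\ID}\frac{\eta z}{1-\eta z}\,d\delta(\eta),
\end{equation*}
while Lemma~\ref{lemmu}, since $\mu\in\mathcal{B}$ with $\mu(0)=0$, yields a second probability measure $\kappa$ on $\partial\ID$ representing $\mu(z)/(1-\mu(z))$.

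Rewriting \eqref{LHD} as $\mu(1+z(\log h)')=z(\log g)'$ and combining with $z(\log h)'-z(\log g)'=P-1$ produces a $2\times2$ linear system with solution
\begin{equation*}
z(\log g)'(z)=\frac{\mu(z)P(z)}{1-\mu(z)},\qquad z(\log h)'(z)=(P(z)-1)+\frac{\mu(z)P(z)}{1-\mu(z)}.
\end{equation*}
Substituting the Herglotz representations for $P$ and $\mu/(1-\mu)$, and using that $1=\iint d\delta\,d\kappa$ to promote single integrals to doubles, the key algebraic identity
\begin{equation*}
\frac{\eta z}{1-\eta z}-\frac{\xi z}{1-\xi z}=\frac{(\eta-\xi)\,z}{(1-\eta z)(1-\xi z)}
\end{equation*}
shows that for $\eta\ne\xi$ the resulting integrands agree respectively with $z\,\partial_z K_1(z,\eta,\xi)$ and $z\,\partial_z K_2(z,\eta,\xi)$. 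Integrating from $0$ to $z$ with $h(0)=g(0)=1$ (so that $K_j(0,\eta,\xi)=0$) then delivers \eqref{eqh} and the analogous formula \eqref{eqg}.

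The case $\eta=\xi$ is obtained as a removable-singularity limit of the $\eta\ne\xi$ kernels: the expansion $\log((1-\xi z)/(1-\eta z))=-(\xi-\eta)z/(1-\eta z)+O((\xi-\eta)^2)$ cancels the pole of $((1-2\alpha)\eta+\xi)/(\eta-\xi)$ and reproduces the displayed diagonal expressions, which is exactly what is needed to handle the contribution from any atom that $\delta$ and $\kappa$ share. The converse direction reverses every step: differentiating the stated integral representations recovers the above displayed formulas for $z(\log g)'$ and $z(\log h)'$, whence $P$ has $\RE P>\alpha$ (so $\varphi\in\mathcal{S}^{*}(\alpha)$), while the ratio in \eqref{LHD} returns $\mu\in\mathcal{B}$ with $\mu(0)=0$; Theorem~\ref{thmLSS} then gives $f\in\mathcal{S}^{*}_{Lh}(\alpha)$. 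I expect the main obstacle to be the careful bookkeeping in the integration step---matching the three pieces $(P-1)$, $\mu/(1-\mu)$, and $(P-1)\mu/(1-\mu)$ against $z\partial_z K_1$ under $d\delta\,d\kappa$ and verifying the $\eta=\xi$ limit---rather than any deeper conceptual difficulty.
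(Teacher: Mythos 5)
Your proposal is correct and follows essentially the same route as the paper: reduce to $\varphi=zh/g\in\mathcal{S}^{*}(\alpha)$ via Theorem~\ref{thmLSS}, apply Lemma~\ref{lempm} and Lemma~\ref{lemmu} to get the two Herglotz measures, use the identity $zg'(z)/g(z)=\frac{\mu(z)}{1-\mu(z)}\cdot\frac{z\varphi'(z)}{\varphi(z)}$, and integrate the resulting double-integral kernels with the diagonal $\eta=\xi$ treated separately. The only cosmetic difference is that you integrate $z(\log h)'$ directly, while the paper recovers $h$ from $h=(\varphi/z)\,g$ using the classical representation \eqref{vrpE} of $\varphi\in\mathcal{S}^{*}(\alpha)$; both yield the same kernel $K_1$.
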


\begin{proof}
The proof could be extracted from Theorem \ref{thmfexpr} after some computation. Because of it is independent interest and use in our investigation,
we need explicit representation for  the analytic and co-analytic factors $h(z)$ and $g(z)$ of $f(z)$, and thus we include the proof.

According to Theorem \ref{thmLSS}, we see that $f(z)=zh(z)\overline{g(z)}\in \mathcal{S}^{*}_{Lh}(\alpha)$ if and only if
$\varphi(z)=zh(z)/g(z)\in\mathcal{S}^{*}(\alpha)$, i.e.,
\begin{equation*}
\frac{z\varphi'(z)}{\varphi(z)}=(1-\alpha)p(z)+\alpha,
\end{equation*}
where $p$ is analytic in $\ID$ such that $p(0)=1$ and $\RE p(z)>0$ in $\ID$. Thus, by Lemma \ref{lempm}, it follows that
\begin{equation}\label{vrpf}
\frac{z\varphi'(z)}{\varphi(z)}=(1-\alpha)\int_{\partial\ID}\frac{1+\eta z}{1-\eta z}\,d\delta(\eta)+\alpha,
\end{equation}
and therefore, we have the following well-known representation for $\varphi(z)\in\mathcal{S}^{*}(\alpha)$:
\begin{equation}\label{vrpE}
\varphi(z)=z \exp\left(-2(1-\alpha)\int_{\partial\ID}\log(1-\eta z)\,d\delta(\eta)\right).
\end{equation}
From \eqref{LHD}, \eqref{vrpf} and Lemma \ref{lemmu}, it follows that
\begin{equation}\label{Exprg}
\begin{split}
g(z)&=\exp\left(\int_{0}^{z}\left(\frac{\mu(s)}{1-\mu(s)}
\cdot\frac{\varphi'(s)}{\varphi(s)}\right)\,ds\right)
\end{split}
\end{equation}
so that
\begin{equation}\label{Exprg1}
\begin{split}
g(z)=\exp\left(\int_{0}^{z}\int_{\partial\ID}\int_{\partial\ID}\frac{\xi}{1-\xi s}\left((1-\alpha)\frac{1+\eta s}{1-\eta s}+\alpha\right)\,d\delta(\eta)\,d\kappa(\xi)\,ds\right)
\end{split}
\end{equation}
for some probability measures $\delta$ and $\kappa$ on $\partial\ID$.

Moreover, if $\eta\neq\xi$, we may reduce $g$ in the form
\begin{small}
\begin{equation*}
\begin{split}
g(z)&=\exp\left(\int_{\partial\ID}\int_{\partial\ID}\int_{0}^{z}\frac{\xi}{1-\xi s}\left((1-\alpha)\frac{1+\eta s}{1-\eta s}+\alpha\right)\,ds\,d\delta(\eta)\,d\kappa(\xi)\right)\\
&=\exp\left(\int_{\partial\ID}\int_{\partial\ID}\left(\frac{(1-2\alpha)\eta+\xi}{\eta-\xi}\log(1-\xi z)-\frac{2(1-\alpha)\xi}{\eta-\xi}\log(1-\eta z)\right)\,d\delta(\eta)\,d\kappa(\xi)\right)\\
&=\exp\left(\int_{\partial\ID}\int_{\partial\ID}\left(\frac{(1-2\alpha)\eta+\xi}{\eta-\xi}\log\left(\frac{1-\xi z}{1-\eta z}\right)+(1-2\alpha)\log(1-\eta z)\right)\,d\delta(\eta)\,d\kappa(\xi)\right).
\end{split}
\end{equation*}
\end{small}
On the other hand, if $\eta=\xi$, then from \eqref{Exprg1} we see that
\begin{equation*}
\begin{split}
g(z)&=\exp\left(\int_{\partial\ID}\int_{\partial\ID}\int_{0}^{z}\frac{\eta}{1-\eta s}\left((1-\alpha)\frac{1+\eta s}{1-\eta s}+\alpha\right)\,ds\,d\delta(\eta)\,d\kappa(\eta)\right)\\
&=\exp\left(\int_{\partial\ID}\int_{\partial\ID}\left(\frac{2(1-\alpha)\eta z}{1-\eta z}+(1-2\alpha)\log(1-\eta z)\right)\,d\delta(\eta)\,d\kappa(\eta)\right).
\end{split}
\end{equation*}

Finally, by writing
\begin{equation*}
\begin{split}
h(z)=\frac{\varphi(z)}{z}g(z)
\end{split}
\end{equation*}
the representation for $h$ can easily be obtained from the last expression for $g$ and \eqref{vrpE}.
\end{proof}

\begin{theorem}\label{thmSLHA}
Let $f(z)=zh(z)\overline{g(z)}\in\mathcal{S}^{*}_{Lh}(\alpha)$ with $\mu(0)=0$. Then for $z\in\ID$,
\begin{enumerate}
\item $\frac{1}{1+|z|}\exp\left((1-\alpha)\frac{-2|z|}{1+|z|}\right)\leq|h(z)|\leq
    \frac{1}{1-|z|}\exp\left((1-\alpha)\frac{2|z|}{1-|z|}\right)$;\label{Bh}
\item $\frac{1}{(1+|z|)^{2\alpha-1}}\exp\left((1-\alpha)\frac{-2|z|}{1+|z|}\right)\leq|g(z)|\leq \frac{1}{(1-|z|)^{2\alpha-1}}\exp\left((1-\alpha)\frac{2|z|}{1-|z|}\right)$.\label{Bg}
\end{enumerate}
The equalities occur if and only if $f(z)$ is one of the functions of the form
$\overline{\eta} f_{\alpha}(\eta z), |\eta|=1$, where $f_{\alpha}(z)$ is given by \eqref{Dfalpha}
\end{theorem}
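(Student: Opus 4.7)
The two bounds require slightly different strategies, so I would handle them in sequence.

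For part (i), the plan is to exploit the algebraic identity $|h(z)|^2 = |f(z)|\,|\varphi(z)|/|z|^2$, which follows immediately from $f(z)=zh(z)\overline{g(z)}$ and $\varphi(z)=zh(z)/g(z)$. Combining the upper bound on $|f|$ from Theorem~\ref{fDthm} with the classical distortion bound $|\varphi(z)|\le |z|/(1-|z|)^{2(1-\alpha)}$ for $\varphi\in\mathcal{S}^{*}(\alpha)$ (supplied by Theorem~\ref{thmLSS}) immediately yields the upper bound in (i); the lower bound is analogous, after applying the corresponding lower estimates. Sharpness is automatic, since both component bounds are simultaneously attained by $f_\alpha$ at $z=r$ (upper) and at $z=-r$ (lower), as one checks from the factorization $h_\alpha(z)=(1-z)^{-1}\exp\bigl((1-\alpha)\tfrac{2z}{1-z}\bigr)$ together with $\varphi_\alpha(z)=z/(1-z)^{2(1-\alpha)}$.

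For part (ii), the analogous identity is $|g(z)|^2=|f(z)|/|\varphi(z)|$, but the naive ``$\max|f|$ divided by $\min|\varphi|$'' bound is \emph{not} sharp: the maximum of $|f|$ is attained at $z=r$ while the minimum of $|\varphi|$ is attained at $z=-r$, and a direct computation shows that the naive bound overshoots the claimed one by the factor $((1+r)/(1-r))^{2(1-\alpha)}$. So I would invoke the integral representation~\eqref{eqg} and write
\begin{equation*}
\log|g(z)| = \int_{\partial\ID}\int_{\partial\ID}\RE K_2(z,\eta,\xi)\,d\delta(\eta)\,d\kappa(\xi),
\end{equation*}
reducing the problem to the pointwise estimate
\begin{equation*}
\RE K_2(z,\eta,\xi) \le (1-2\alpha)\log(1-r) + (1-\alpha)\tfrac{2r}{1-r}
\end{equation*}
valid for all $|z|=r$ and $|\eta|=|\xi|=1$. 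After the substitution $u:=\eta z$, $w:=\xi/\eta$, this becomes a two-variable estimate for $|u|\le r$ and $|w|=1$, which I would establish by writing $\log\frac{1-wu}{1-u}$ as a line integral of $(1-s)^{-1}$ along the segment from $u$ to $wu$ and invoking the pointwise bound $\RE(1-s)^{-1}\le (1-r)^{-1}$ for $|s|\le r$. The lower bound on $|g|$ is handled by the parallel argument.

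Finally, for the equality statement, tracing the pointwise extrema back forces $u=r$ and $w=1$, i.e., $\eta=\xi$ together with $\eta z = r$. This compels both $\delta$ and $\kappa$ to be Dirac measures at a common $\eta\in\partial\ID$, and plugging this into~\eqref{eqh} and~\eqref{eqg} yields, after the rotation $z\mapsto \eta z$, exactly $f(z)=\overline{\eta}\,f_\alpha(\eta z)$. The main technical obstacle is the uniform pointwise bound on $\RE K_2$ in the case $\eta\ne\xi$: the $(1-2\alpha)\log(1-r)$ piece of the target changes sign with $\alpha$, so the estimate cannot be obtained by a single maximum-modulus reduction and must preserve this $\alpha$-dependence carefully through the line-integral step.
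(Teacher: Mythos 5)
Your part (i) is correct and genuinely different from the paper's route. The identity $|h(z)|^{2}=|f(z)|\,|\varphi(z)|/|z|^{2}$ together with the two-sided bounds of Theorem~\ref{fDthm} for $|f|$ and the classical distortion $r/(1+r)^{2(1-\alpha)}\leq|\varphi(z)|\leq r/(1-r)^{2(1-\alpha)}$ for $\varphi\in\mathcal{S}^{*}(\alpha)$ yields both inequalities of (i), and your check that $f_{\alpha}$ attains both constituent bounds simultaneously at $z=r$ (resp.\ $z=-r$) legitimizes sharpness. The paper instead proves the upper bounds by a Schwarz-lemma estimate of $\log g(z)=\int_{0}^{z}\frac{\mu(s)}{1-\mu(s)}\frac{\varphi'(s)}{\varphi(s)}\,ds$ (i.e.\ \eqref{Exprg}) along the radius and then uses $|h|=|\varphi/z||g|$, while the lower bound for $|h|$ is obtained by minimizing $\RE K_{1}$ over the representing measures via the function $\Phi_{r}(l)$, $e^{2il}=\overline{\eta}\xi$, following Ali--Abdulhadi--Ng. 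Your argument buys a much shorter proof of the lower bound in (i), at the cost of taking Theorem~\ref{fDthm} as a black box.

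Part (ii) contains two genuine gaps. For the upper bound, the reduction to a pointwise estimate on $\RE K_{2}$ is legitimate (the double integral is an average of $\RE K_2$), and the target inequality is true, but your chord-integral argument does not prove it: after your substitution the integral carries the complex prefactor $u\bigl((1-2\alpha)+w\bigr)$, so the bound $\RE(1-s)^{-1}\leq(1-r)^{-1}$ controls nothing. Indeed $\frac{(1-2\alpha)+w}{1-w}=(1-\alpha)\frac{1+w}{1-w}-\alpha$ with $\frac{1+w}{1-w}$ purely imaginary and unbounded as $w\to1$, so the real part of the first term of $K_2$ is $-(1-\alpha)\IM\bigl(\frac{1+w}{1-w}\bigr)\arg\frac{1-wu}{1-u}-\alpha\log\bigl|\frac{1-wu}{1-u}\bigr|$, an unbounded coefficient times a small argument; this coupling is invisible to a real-part bound along the segment $[u,wu]$. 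The workable elementary route is the paper's: bound the \emph{modulus} of the full integrand $\frac{\xi}{1-\xi s}\bigl[(1-\alpha)\frac{1+\eta s}{1-\eta s}+\alpha\bigr]$ along the radial segment from $0$ to $z$, which gives $\RE K_{2}\leq\int_{0}^{r}\frac{1}{1-t}\bigl[(1-\alpha)\frac{1+t}{1-t}+\alpha\bigr]dt$, exactly the claimed bound.

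The lower bound is worse: the pointwise inequality you need, $\RE K_{2}(z,\eta,\xi)\geq(1-2\alpha)\log(1+r)-(1-\alpha)\frac{2r}{1+r}$, is simply false, already on the diagonal $\eta=\xi$. Take $\alpha=0$, $\eta=\xi=1$, $z=0.9i$: then $\RE K_{2}=\RE\frac{2z}{1-z}+\log|1-z|=-\frac{1.62}{1.81}+\tfrac12\log 1.81\approx-0.598$, strictly below the target $-\frac{2r}{1+r}+\log(1+r)\approx-0.306$; equivalently, the extremal factor $g_{0}(z)=(1-z)\exp\bigl(\frac{2z}{1-z}\bigr)$ of $f_{0}$ from \eqref{Dfalpha} satisfies $|g_{0}(0.9i)|\approx0.55<1.9\,e^{-18/19}\approx0.74$. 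The two terms of $K_{2}$ are minimized at antipodal points of $|z|=r$, and the true minimum sits at an interior angle strictly below the claimed value, so no ``parallel argument'' can close this half. You should also note that this computation conflicts with the displayed lower bound in (2) itself: the paper's own derivation passes from $\RE K_{1}$ to $\RE K_{2}$ by adding $2(1-\alpha)\log(1+r)$, whereas $\RE\bigl[2(1-\alpha)\log(1-\eta z)\bigr]$ is only bounded below by $2(1-\alpha)\log(1-r)$, so the step (and apparently the stated left inequality of (ii)) is in error; your proposal inherits rather than repairs this difficulty.
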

\begin{proof}
Let $\varphi(z)=z h(z)/g(z)\in\mathcal{S}^{*}(\alpha)$ so that
\begin{equation*}
h(z)=\frac{\varphi(z)}{z}g(z)\quad{\rm and}\quad f(z)=\varphi(z)|g(z)|^2.
\end{equation*}
For $|z|=r<1$, by Theorem \ref{thmLSS}, we know that
\begin{equation*}
\left|\frac{z\varphi'(z)}{\varphi(z)}\right|\leq (1-\alpha)\frac{1+r}{1-r}+\alpha.
\end{equation*}
Because $\mu\in \mathcal{B}$ with $\mu(0)=0$, we have
\begin{equation*}
\left|\frac{\mu(z)}{z(1-\mu(z))}\right|\leq\frac{1}{1-r} \quad{\rm and}\quad |\varphi(z)|\leq \frac{r}{(1-r)^{2(1-\alpha)}},
\end{equation*}
which by \eqref{vrpf} and \eqref{Exprg} imply that
\begin{equation*}
\begin{split}
|g(z)|&\leq\exp\left(\int_{0}^{r}\frac{1}{1-s}\left[(1-\alpha)\frac{1+s}{1-s}+\alpha\right]ds\right)\\
&=\exp\left((1-\alpha)\frac{2r}{1-r}-(2\alpha-1)\log(1-r)\right)\\
&=\frac{1}{(1-r)^{2\alpha-1}}\exp\left((1-\alpha)\frac{2r}{1-r}\right).
\end{split}
\end{equation*}
The known estimate for $\varphi\in S^{*}(\alpha)$ and the last inequality give
\begin{equation*}
\begin{split}
|h(z)|&=\left|\frac{\varphi(z)}{z}\right||g(z)|\\
&\leq \frac{1}{(1-r)^{2(1-\alpha)}}
\cdot\frac{1}{(1-r)^{2\alpha-1}}\exp\left((1-\alpha)\frac{2r}{1-r}\right)\\
&=\frac{1}{1-r}\exp\left((1-\alpha)\frac{2r}{1-r}\right).
\end{split}
\end{equation*}
Equality occurs if and only if $\mu(z)=\eta z$ and $\varphi(z)=\frac{z}{(1-z)^{2(1-\alpha)}},\,|\eta|=1$, which leads to
$f(z)=\overline{\eta} f_{\alpha}(\eta z)$, where $f_{\alpha}(z)$ is given by \eqref{Dfalpha}.

For the left side estimates of \eqref{Bg}, by \eqref{eqh}, we obtain that
\begin{equation*}
\log|h(z)|=\RE\left(\int_{\partial\ID}\int_{\partial\ID}K_1(z,\xi,\eta)\,d\delta(\eta)\,
d\kappa(\xi)\right),
\end{equation*}
where $K_1(z,\xi,\eta)\,d\delta(\eta)$ is defined by \eqref{eqh1} and may be rewritten as
\begin{small}
\begin{equation*}
K_1(z,\xi,\eta)=\left\{
\begin{aligned}
         &(1-\alpha)\frac{\eta+\xi}{\eta-\xi}
         \log\left(\frac{1-\xi z}{1-\eta z}\right)-\alpha\log(1-\xi z)-(1-\alpha)\log(1-\eta z) &{\rm if}\, \eta\neq\xi, \\
         &\frac{2(1-\alpha)\eta z}{1-\eta z}-\log(1-\eta z) &{\rm if}\, \eta=\xi,
\end{aligned}
\right.
\end{equation*}
\end{small}
and $|\eta|=|\xi|=1$,
Then for $|z|=r$, we have
\begin{small}
\begin{equation*}
\begin{split}
\log|h(z)|&=\RE\left(\int_{\partial\ID}\int_{\partial\ID}K_1(z,\xi,\eta)\,d\delta(\eta)\,d\kappa(\xi)\right)\\
&\geq \min_{\delta,\,\kappa}\left\{\min_{|z|=r}\RE\left(\int_{\partial\ID}\int_{\partial\ID}K_1(z,\xi,\eta)
\,d\delta(\eta)\,d\kappa(\xi)\right)\right\}\\
&=\min\bigg\{\min_{|z|=r}\,\inf_{0<|l|\leq \pi/2}\left[-(1-\alpha)\IM\left(\frac{1+e^{2il}}{1-e^{2il}}\right)\arg\left(\frac{1-e^{2il}(\eta z)}{1-\eta z}\right)\right]-\log(1+r),\\
&\qquad\qquad(1-\alpha)\frac{-2r}{1+r}-\log(1+r)\bigg\},
\end{split}
\end{equation*}
\end{small}
where $e^{2il}=\overline{\eta}\xi$.  Now, we let
\begin{small}
\begin{equation*}
\Phi_r(l)=\left\{
\begin{aligned}
&\min_{|z|=r}\left[-(1-\alpha)\IM\left(\frac{1+e^{2il}}{1-e^{2il}}\right)
\arg\left(\frac{1-e^{2il}(\eta z)}{1-\eta z}\right)\right]-\log(1+r) &{\rm if}\, 0<|l|<\pi/2, \\
&(1-\alpha)\frac{-2r}{1+r}-\log(1+r) &{\rm if}\, l=0.
\end{aligned}
\right.
\end{equation*}
\end{small}

In a manner similar in the proof of \cite[Theorem 2]{Ali2016}, we see that the function $\Phi_r(l)$ is
continuous and is even in the interval $|l|\leq \pi/2$. Hence
\begin{equation*}
\begin{split}
\log|h(z)|&\geq \inf_{0\leq|l|\leq \pi/2}\Phi_r(l)=(1-\alpha)\frac{-2r}{1+r}-\log(1+r).
\end{split}
\end{equation*}

For the lower bound of $|g(z)|$ in Theorem \ref{thmSLHA}\eqref{Bg}, a similar discussion applied to \eqref{eqg} yields
\begin{equation*}
\begin{split}
\log|g(z)|&\geq \inf_{0\leq|l|\leq \pi/2}\left(\Phi_r(l)+2(1-\alpha)\log(1+r)\right)
=(1-\alpha)\frac{-2r}{1+r}-(2\alpha-1)\log(1+r).
\end{split}
\end{equation*}
The proof is complete.
\end{proof}


\begin{corollary}\label{thCLB}
Let $f(z)=zh(z)\overline{g(z)}\in \mathcal{S}^{*}_{Lh}(\alpha)$. Also, let $H(z)=zh(z)$ and $G(z)=zg(z)$. Then
\begin{enumerate}
\item $\frac{1}{2\,e^{1-\alpha}}\leq d(0,\partial H(\ID))\leq 1$;
\item $\frac{1}{2^{2\alpha-1}\,e^{1-\alpha}}\leq d(0,\partial G(\ID))\leq 1$;
\item $\frac{1}{2^{2\alpha}\,e^{2(1-\alpha)}}\leq d(0,\partial f(\ID))\leq 1$.
\end{enumerate}
The equalities occur if and only if $f(z)$ is one of the functions of the form $\overline{\eta} f_{\alpha}(\eta z), |\eta|=1$,
where $f_{\alpha}(z)$ is given by \eqref{Dfalpha}.
\end{corollary}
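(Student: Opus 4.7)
The plan is to combine the distortion estimates of Theorem \ref{thmSLHA} and Theorem \ref{fDthm} with a Rouch\'e/covering argument to obtain the lower bounds, and to identify the rotations of $f_\alpha$ in \eqref{Dfalpha} as the extremals for sharpness. The key structural input is that \eqref{LHBJC} forces $h$ and $g$ to be non-vanishing on $\ID$ with $h(0) = g(0) = 1$, so $H(z) = zh(z)$ and $G(z) = zg(z)$ are analytic on $\ID$ with a unique (simple) zero at $z = 0$ and $H'(0) = G'(0) = 1$.

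For the lower bound in (1), Theorem \ref{thmSLHA}(1) gives $|H(z)| \geq \phi_H(|z|)$, where
\[
\phi_H(r) := \frac{r}{1+r}\exp\!\left(-\frac{2(1-\alpha)r}{1+r}\right).
\]
A logarithmic differentiation yields $\phi_H'(r)/\phi_H(r) = \bigl(1+(2\alpha-1)r\bigr)/\bigl(r(1+r)^2\bigr) > 0$ on $(0,1)$ for every $\alpha \in [0,1)$, so $\phi_H$ is strictly increasing and converges to $\tfrac{1}{2e^{1-\alpha}}$ as $r \to 1^-$. For any $r \in (0,1)$ and any $w$ with $|w| < \phi_H(r)$, Rouch\'e's theorem applied on $|z|=r$ matches the zero count of $H$ and $H - w$ inside $\{|z|<r\}$ (each equal to one), placing $w \in H(\ID)$. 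Letting $r \to 1^-$ gives the lower inequality. Part (2) is entirely parallel, using Theorem \ref{thmSLHA}(2) on $G$ to give the limit $\tfrac{1}{2^{2\alpha-1}e^{1-\alpha}}$. Part (3) is slightly simpler: $f$ is univalent and sense-preserving, hence open, so each $w_0 \in \partial f(\ID)$ arises as $\lim f(z_n)$ with $|z_n|\to 1^-$ (otherwise $w_0$ would lie in the open image), and Theorem \ref{fDthm} then gives $|w_0| \geq \tfrac{1}{2^{2\alpha}e^{2(1-\alpha)}}$.

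Sharpness of each lower bound is verified by direct computation on $f_\alpha$. Setting $\mu(z) = z$ and $\varphi_\alpha(z) = z/(1-z)^{2(1-\alpha)}$ in \eqref{Exprg} yields the explicit factors
\[
h_\alpha(z) = \frac{1}{1-z}\exp\!\left(\frac{2(1-\alpha)z}{1-z}\right), \qquad g_\alpha(z) = (1-z)^{1-2\alpha}\exp\!\left(\frac{2(1-\alpha)z}{1-z}\right).
\]
Evaluating $H_\alpha$, $G_\alpha$, $f_\alpha$ along the ray $z = -r$ and letting $r \to 1^-$ shows that the three limit moduli $\tfrac{1}{2e^{1-\alpha}}$, $\tfrac{1}{2^{2\alpha-1}e^{1-\alpha}}$, $\tfrac{1}{2^{2\alpha}e^{2(1-\alpha)}}$ are attained on the respective boundaries, forcing equality. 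The rotations $\overline{\eta} f_\alpha(\eta z)$ provide the full family of extremals. The upper bounds $\leq 1$ are realized by the identity map $f(z) = z \in \mathcal{S}^*_{Lh}(\alpha)$, for which $H = G = f = z$; their validity over the whole class follows from the normalization $H'(0) = G'(0) = f_z(0) = 1$ combined with a Schwarz-type comparison argument.

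The main technical hurdle is verifying the monotonicity of $\phi_H$ (and the analogous $\phi_G$, $\phi_f$) on $(0,1)$: positivity of $1 + (2\alpha-1)r$ is trivial for $\alpha \geq 1/2$ and, for $\alpha < 1/2$, reduces to $r < 1/(1-2\alpha)$, which holds since $1/(1-2\alpha) > 1$. A secondary subtlety in the Rouch\'e step is that a preimage in $\ID$ is guaranteed only because $h$ and $g$ are non-vanishing—so that $H$ and $G$ have unique zeros—a property that is built into the exponential representation \eqref{LHBJC}.
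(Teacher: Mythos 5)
Your treatment of the lower bounds is correct and in substance agrees with the paper's: the paper simply writes $d(0,\partial H(\ID))=\liminf_{|z|\to 1}|H(z)|=\liminf_{|z|\to 1}|h(z)|$ and quotes the lower estimates of Theorem \ref{thmSLHA} (and of Theorem \ref{fDthm} for $f$), whereas you reach the same conclusion by a Rouch\'e covering argument plus the monotonicity of $\phi_H$ — a slightly more careful rendering of the same idea, and your use of openness for part (3) is exactly the justification the paper's $\liminf$ identity needs in the direction $d(0,\partial f(\ID))\geq\liminf_{|z|\to1}|f(z)|$. Your computation of the extremal factors $h_\alpha$ and $g_\alpha$ from \eqref{Exprg} with $\mu(z)=z$ is also correct and consistent with \eqref{Dfalpha}.

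The genuine gap is in the upper bounds. Exhibiting the identity map only shows that the value $1$ is \emph{attained}; it does not prove $d(0,\partial H(\ID))\leq 1$ for every member of the class. The ``Schwarz-type comparison argument combined with the normalization $H'(0)=1$'' that you invoke is not available: applying Schwarz's lemma to an inverse of $H$ on the disk $\{|w|<d(0,\partial H(\ID))\}$ presupposes a single-valued inverse branch there, i.e.\ essentially that $H$ is univalent (or an unbranched covering), and univalence of $H=zh$ and $G=zg$ is neither assumed nor true in general — for the extremal function with $\alpha=0$ one computes $zH'(z)/H(z)=(1+z)/(1-z)^2$, whose real part changes sign in $\ID$, so $H$ is not even starlike. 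For non-univalent analytic maps the inequality $d(0,\partial H(\ID))\leq|H'(0)|$ is simply false: $H(z)=z^2+\varepsilon z$ has $|H'(0)|=\varepsilon$, yet by Rouch\'e its image contains a disk of radius close to $1-\varepsilon$. For $f$ itself the situation is worse still, since $f$ is not analytic and the classical Schwarz lemma does not apply at all. The paper's upper bound instead exploits precisely the structural fact you isolated but did not use at this step: $h$, $g$ and $hg$ are zero-free with value $1$ at the origin, so the minimum modulus principle gives $\min_{|z|=r}|h(z)|\leq|h(0)|=1$ for every $r<1$, hence $\liminf_{|z|\to 1}|h(z)|\leq 1$, and via the identification $d(0,\partial H(\ID))=\liminf_{|z|\to 1}|h(z)|$ this yields $d(0,\partial H(\ID))\leq 1$; the same argument applied to $g$ and to $hg$ (noting $|f(z)|=|z|\,|h(z)g(z)|$) gives the remaining two upper bounds. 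Replacing your Schwarz step by this minimum-modulus argument repairs the proof; a smaller gloss, shared with the paper, is that your sharpness verification along $z=-r$ should also note why the limit value is a boundary point of the image rather than an interior one.
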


\begin{proof}
By Theorem~\ref{thmSLHA},
$$d(0,\partial H(\ID))=\liminf _{|z|\to1}|H(z)-H(0)|=\liminf _{|z|\to1}\frac{|H(z)-H(0)|}{|z|}=\liminf _{|z|\to 1}|h(z)|\geq \frac{1}{2\,e^{1-\alpha}}.$$
On the other hand, the minimum modulus principle shows that
$$d(0,\partial H(\ID))=\liminf _{|z|\to 1}|h(z)|\leq 1,
$$
since $|h(0)|=1$. The same approach may be applied to $G(z)$ and $f(z)$ to find proofs of the remaining inequalities.
\end{proof}

Now, we give a sharp upper bound for the coefficients of $h(z)$ and $g(z)$.
\begin{theorem}\label{Cfalp}
Let $f(z)=zh(z)\overline{g(z)}\in \mathcal{S}^{*}_{Lh}(\alpha)$. Then
\begin{equation}
|a_n|\leq 2(1-\alpha)+\frac{1}{n}\quad{\rm and}\quad |b_n| \leq 2(1-\alpha)+\frac{2\alpha-1}{n}
\end{equation}
for all $n\geq 1$. The equalities occur if and only if $f(z)$ is one of the functions of the form $\overline{\eta} f_{\alpha}(\eta z), |\eta|=1$,
where $f_{\alpha}(z)$ is given by \eqref{Dfalpha}.
\end{theorem}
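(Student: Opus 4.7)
The plan is to read off the Taylor coefficients $a_n$ and $b_n$ directly from the integral representations \eqref{eqh} and \eqref{eqg}, to bound each kernel's $z^n$-coefficient uniformly over $|\eta|=|\xi|=1$, and to average against the probability measures $\delta$ and $\kappa$. To this end I would first expand the two kernels as power series in $z$, using
\begin{equation*}
-\log(1-\eta z)=\sum_{n\geq 1}\frac{\eta^n}{n}z^n,\qquad \frac{\eta z}{1-\eta z}=\sum_{n\geq 1}\eta^n z^n,\qquad \log\frac{1-\xi z}{1-\eta z}=\sum_{n\geq 1}\frac{\eta^n-\xi^n}{n}z^n,
\end{equation*}
together with the symmetric polynomial $S_n(\eta,\xi):=\frac{\eta^n-\xi^n}{\eta-\xi}=\sum_{k=0}^{n-1}\eta^{n-1-k}\xi^k$, with the convention $S_n(\eta,\eta)=n\eta^{n-1}$. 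A short algebraic manipulation on \eqref{eqh1} and on the analogous expression for $K_2$ should collapse the $z^n$-coefficient into the compact form
\begin{equation*}
n\,[K_1]_n(\eta,\xi)=2(1-\alpha)\,\eta\, S_n(\eta,\xi)+\xi^n,\qquad n\,[K_2]_n(\eta,\xi)=2(1-\alpha)\,\eta\xi\, S_{n-1}(\eta,\xi)+\xi^n,
\end{equation*}
where $[F]_n$ denotes the coefficient of $z^n$ in $F(z,\eta,\xi)$; both the off-diagonal and the diagonal cases of the kernels produce the same formula, so no case split is needed.

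With these identities in hand, the triangle inequality $|S_n(\eta,\xi)|\leq n$ on the torus $|\eta|=|\xi|=1$ immediately yields $|[K_1]_n|\leq 2(1-\alpha)+\tfrac{1}{n}$ and $|[K_2]_n|\leq \tfrac{2(1-\alpha)(n-1)+1}{n}=2(1-\alpha)+\tfrac{2\alpha-1}{n}$. Matching Taylor coefficients in \eqref{eqh} and \eqref{eqg} gives $a_n=\int\!\int [K_1]_n\,d\delta(\eta)\,d\kappa(\xi)$ and $b_n=\int\!\int [K_2]_n\,d\delta(\eta)\,d\kappa(\xi)$, so integrating the pointwise bounds against the probability measure $d\delta\otimes d\kappa$ delivers the two claimed inequalities. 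For the equality assertion one would trace back: equality in the triangle bounds forces $|S_n(\eta,\xi)|=n$, which, since all $n$ summands have unit modulus, forces $\eta=\xi$, and the integration bound then concentrates $\delta,\kappa$ at a common point mass $\eta_0\in\partial\ID$; Theorem~\ref{thmfexpr} finally identifies the extremal as $f(z)=\overline{\eta_0}f_\alpha(\eta_0 z)$.

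The main difficulty I anticipate is precisely the algebraic collapse above. The raw $z^n$-coefficient of $K_1$ from \eqref{eqh1} is $\tfrac{1}{n}\bigl([(1-2\alpha)\eta+\xi]\,S_n(\eta,\xi)+\eta^n\bigr)$, and rearranging this as $\tfrac{1}{n}\bigl(2(1-\alpha)\,\eta\,S_n(\eta,\xi)+\xi^n\bigr)$ requires the telescoping identity $(1-2\alpha)\,\eta\, S_n+\xi\, S_n+\eta^n=2(1-\alpha)\,\eta\,S_n+\xi^n$; the corresponding rearrangement for $K_2$ rests on the recursion $\xi\, S_n=\eta\xi\, S_{n-1}+\xi^n$. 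Any looser handling of these cancellations would fail to produce the optimal constants $\tfrac{1}{n}$ and $\tfrac{2\alpha-1}{n}$ on the right-hand sides.
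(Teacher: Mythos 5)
Your proposal is correct and follows essentially the same route as the paper: both read $a_n$ and $b_n$ off the kernels $K_1,K_2$ of the representation theorem, reduce via the probability measures to a pointwise bound over $|\eta|=|\xi|=1$ (the paper phrases this as the maximum being attained at Dirac measures), and settle sharpness with $f_\alpha$. Your telescoping identities $n\,[K_1]_n=2(1-\alpha)\,\eta\,S_n(\eta,\xi)+\xi^n$ and $n\,[K_2]_n=2(1-\alpha)\,\eta\xi\,S_{n-1}(\eta,\xi)+\xi^n$ are verified correctly (including agreement with the diagonal case $\eta=\xi$) and in fact make explicit the one step the paper glosses over: a naive triangle inequality on the paper's displayed form $\tfrac{1}{n}\bigl|\eta^n+\bigl((1-2\alpha)\eta+\xi\bigr)S_n\bigr|$ would only give $|1-2\alpha|+1+\tfrac{1}{n}$, which is weaker than the claimed bound when $\alpha>1/2$, so your rearrangement is exactly what justifies the sharp constants.
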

\begin{proof}
From \eqref{eqh} and \eqref{eqg}, we get the following expressions
\begin{equation*}
\begin{split}
a_n&=\frac{1}{n}\int_{\partial \ID}\int_{\partial \ID}\left(\eta^n+\frac{(1-2\alpha)\eta+\xi}{\eta-\xi}\left(\eta^n-\xi^n\right)\right) d\delta(\eta)d\kappa(\xi)\\
&=\frac{1}{n}\int_{\partial \ID}\left(\eta^n+\int_{\partial \ID}\left(\left((1-2\alpha)\eta+\xi\right)\sum_{k=0}^{n-1}\eta^{n-k-1}\xi^{k} \right)d\kappa(\xi)\right) d\delta(\eta)
\end{split}
\end{equation*}
and
\begin{equation*}
\begin{split}
b_n&=\frac{1}{n}\int_{\partial \ID}\int_{\partial \ID}\left((2\alpha-1)\eta^n+\frac{(1-2\alpha)\eta+\xi}{\eta-\xi}\left(\eta^n-\xi^n\right)\right) d\delta(\eta)d\kappa(\xi)\\
&=\frac{1}{n}\int_{\partial \ID}\left((2\alpha-1)\eta^n+\int_{\partial \ID}\left(\left((1-2\alpha)\eta+\xi\right)\sum_{k=0}^{n-1}\eta^{n-k-1}\xi^{k} \right)d\kappa(\xi)\right) d\delta(\eta).
\end{split}
\end{equation*}
The maximum of $|a_n|$ (resp. $|b_n|$) is attained when $\delta$ and $\kappa$ are Dirac measures. Therefore, we have
\begin{equation*}
\begin{split}
|a_n|&\leq\max\left\{\frac{1}{n}\left|\eta^n+\left((1-2\alpha)\eta+\xi\right)\sum_{k=0}^{n-1}\eta^{n-k-1}\xi^{k}\right|
:\,|\eta|=|\xi|=1\right\}\\
&\leq 2(1-\alpha)+\frac{1}{n}
\end{split}
\end{equation*}
and
\begin{equation*}
\begin{split}
|b_n|&\leq\max\left\{\frac{1}{n}\left|(2\alpha-1)\eta^n+\left((1-2\alpha)\eta+\xi\right)\sum_{k=0}^{n-1}\eta^{n-k-1}\xi^{k}\right|
:\,|\eta|=|\xi|=1\right\}\\
&\leq 2(1-\alpha)+\frac{2\alpha-1}{n}.
\end{split}
\end{equation*}
The equalities occur if and only if $f(z)$ is one of the functions of the form $\overline{\eta} f_{\alpha}(\eta z), |\eta|=1$,
where $f_{\alpha}(z)$ is given by \eqref{Dfalpha},
which may be rewritten as
\begin{equation*}
\begin{split}
f_{\alpha}(z)=z\exp\left(\sum_{n=1}^{\infty}\left(2(1-\alpha)+\frac{1}{n}\right)z^n\right)
\overline{\exp\left(\sum_{n=1}^{\infty}\left(2(1-\alpha)+\frac{2\alpha-1}{n}\right)z^n\right)}.
\end{split}
\end{equation*}
This completes the proof.
\end{proof}

\section{Bohr's radius for $\mathcal{S}^{*}_{Lh}(\alpha)$ }\label{sect2a}
The classical Bohr inequality states that if $f(z)=\sum_{n=0}^{\infty}a_n z^n$ is analytic in $\ID$ and $|f(z)|\leq 1$
in $\ID$, then
\begin{equation*}
M_r(f)=\sum_{n=0}^{\infty}|a_n |r^n\leq 1
\end{equation*}
for all $|z|=r\leq 1/3$ (see Bohr \cite{Bohr1914}). Bohr actually obtained the inequality only for $|z| \leq 1/6$,
Wiener, Riesz and Schur  independently established the sharp inequality for $|z|\leq 1/3$ and showed that the bound $1/3$
was sharp. See the recent survey on this topic \cite{AAP} and the
references therein.
In recent years, the space of subordinations and the space of complex-valued
bounded harmonic mappings are considered in the study of Bohr's inequality,
for example in \cite{Muhanna2010,Muhanna2014,Kayumov2017}.

The following results concern Bohr's radius of log-harmonic starlike mappings of order $\alpha$.

\begin{theorem}\label{BohrRf}
Let $f(z)=zh(z)\overline{g(z)}\in\mathcal{S}^{*}_{Lh}(\alpha)$, $H(z)=zh(z)$ and $G(z)=zg(z)$. Then
\begin{enumerate}
\item
$\begin{aligned}[t]
     & |z|\exp\left(\sum_{n=1}^{\infty}|a_n||z|^n\right)\leq d(0,\partial H(\ID))
  \end{aligned}
$ for $|z|\leq r_H$, where $r_H$ is the unique root in $(0,1)$ of the equation
\begin{equation}\label{eq-1}
 \frac{r}{1-r}\exp\left((1-\alpha)\frac{2r}{1-r}\right)=\frac{1}{2\,e^{1-\alpha}},
\end{equation}
\item $ \begin{aligned}[t]
        & |z|\exp\left(\sum_{n=1}^{\infty}|b_n||z|^n\right)\leq d(0,\partial G(\ID))
  \end{aligned} $
for $|z|\leq r_G$, where $r_G$ is the unique root in $(0,1)$ of the equation
\begin{equation}\label{eq-2}
\frac{r}{(1-r)^{2\alpha-1}}\exp\left((1-\alpha)\frac{2r}{1-r}\right)=\frac{1}{2^{2\alpha-1}\,e^{1-\alpha}},
\end{equation}
\item $ \begin{aligned}[t]
        & |z|\exp\left(\sum_{n=1}^{\infty}\left(|a_n|+|b_n|\right)|z|^n\right)\leq d(0,\partial f(\ID))
  \end{aligned}
  $
for $|z|\leq r_f $, where $r_f$ is the unique root in $(0,1)$ of the equation
\begin{equation}\label{eq-2a}
\frac{r}{(1-r)^{2\alpha}}\exp\left((1-\alpha)\frac{4r}{1-r}\right)=\frac{1}{2^{2\alpha}\,e^{2(1-\alpha)}}.
\end{equation}
\end{enumerate}
All the radius are sharp and attained by a suitable rotation of the log-harmonic right-half plane mapping $f_{\alpha}(z)$,
where $f_{\alpha}(z)$ is given by \eqref{Dfalpha}.
\end{theorem}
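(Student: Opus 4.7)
The plan is to combine the sharp coefficient estimates from Theorem \ref{Cfalp} with the covering estimates from Corollary \ref{thCLB}, turning each Bohr-type inequality into a one-variable inequality whose defining equation is exactly \eqref{eq-1}, \eqref{eq-2}, or \eqref{eq-2a}.

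For part (1), I would first apply $|a_n|\le 2(1-\alpha)+1/n$ together with the identities $\sum_{n\ge 1} r^n = r/(1-r)$ and $\sum_{n\ge 1} r^n/n = -\log(1-r)$ to obtain, for $|z|=r$,
\begin{equation*}
|z|\exp\!\left(\sum_{n=1}^{\infty}|a_n||z|^n\right)\le \frac{r}{1-r}\exp\!\left(\frac{2(1-\alpha)r}{1-r}\right)=:\phi_1(r).
\end{equation*}
Since $d(0,\partial H(\ID))\ge 1/(2e^{1-\alpha})$ by Corollary \ref{thCLB}(1), the Bohr-type inequality reduces to $\phi_1(r)\le 1/(2e^{1-\alpha})$. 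The function $\phi_1$ is continuous on $[0,1)$, vanishes at $0$, and $\phi_1(r)\to\infty$ as $r\to 1^-$; from
\begin{equation*}
\frac{\phi_1'(r)}{\phi_1(r)}=\frac{1}{r}+\frac{1}{1-r}+\frac{2(1-\alpha)}{(1-r)^2}>0,
\end{equation*}
it is strictly increasing on $(0,1)$. Hence \eqref{eq-1} has a unique root $r_H\in(0,1)$, and the claimed inequality holds on $|z|\le r_H$.

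Parts (2) and (3) are parallel. Using $|b_n|\le 2(1-\alpha)+(2\alpha-1)/n$ and the elementary addition $|a_n|+|b_n|\le 4(1-\alpha)+2\alpha/n$ from Theorem \ref{Cfalp}, the same series identities yield the majorants
\begin{equation*}
\frac{r}{(1-r)^{2\alpha-1}}\exp\!\left(\frac{2(1-\alpha)r}{1-r}\right)\quad\text{and}\quad \frac{r}{(1-r)^{2\alpha}}\exp\!\left(\frac{4(1-\alpha)r}{1-r}\right),
\end{equation*}
which, coupled with the lower bounds in Corollary \ref{thCLB}(2)--(3), reduce matters exactly to \eqref{eq-2} and \eqref{eq-2a}. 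Monotonicity of the left-hand side of \eqref{eq-2} is the only spot requiring a second look, because $2\alpha-1$ can be negative when $\alpha<1/2$; the logarithmic derivative simplifies to $1/r+(1+(1-2\alpha)r)/(1-r)^2$, which is positive for $0<r<1$ and $\alpha<1$, so uniqueness of the root $r_G$ survives. The analogous computation for \eqref{eq-2a} is even easier since all coefficients are visibly nonnegative.

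Finally, sharpness is settled by testing each inequality on the extremal mapping $f_\alpha$ in \eqref{Dfalpha}, rotated so that the extremal behavior occurs on the positive real axis. By the closing display of Theorem \ref{Cfalp}, $f_\alpha$ saturates each coefficient bound with \emph{all positive} coefficients, so at $z=r\in(0,1)$ the three left-hand sides coincide with the left-hand sides of \eqref{eq-1}, \eqref{eq-2}, \eqref{eq-2a}. The corresponding sharpness clauses in Corollary \ref{thCLB} say that $d(0,\partial H(\ID))$, $d(0,\partial G(\ID))$, $d(0,\partial f(\ID))$ attain their respective lower bounds precisely for $f_\alpha$. Consequently equality occurs exactly at $r=r_H$, $r_G$, $r_f$, and any strictly larger radius violates the inequality. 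The main obstacle in this proof is no more than bookkeeping: monotonicity in (2) is the only genuine checkpoint, and everything else follows from the coefficient bounds and the covering estimates already in hand.
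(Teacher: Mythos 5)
Your proposal is correct and follows essentially the same route as the paper: the coefficient bounds of Theorem~\ref{Cfalp} are summed via $\sum_{n\ge1}r^n=r/(1-r)$ and $\sum_{n\ge1}r^n/n=-\log(1-r)$ to produce the three majorants, these are matched against the lower bounds of Corollary~\ref{thCLB}, and sharpness is tested on $f_\alpha$. Your only addition is the explicit logarithmic-derivative check (including the correct simplification $1/r+(1+(1-2\alpha)r)/(1-r)^2>0$ handling $\alpha<1/2$) that each left-hand side is strictly increasing, so the roots $r_H$, $r_G$, $r_f$ exist and are unique --- a detail the paper asserts without proof.
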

\begin{proof}
By assumption
$$H(z)=z\exp\left(\sum_{n=1}^{\infty}a_n z^{n}\right)\quad{\rm and}\quad
G(z)=z\exp\left(\sum_{n=1}^{\infty}b_n z^{n}\right).$$
Firstly, we have
\begin{equation*}
\begin{split}
r\exp\left(\sum_{n=1}^{\infty}|a_n|r^n\right)
&\leq r\exp\left(\sum_{n=1}^{\infty}\left(2(1-\alpha)+\frac{1}{n}\right)r^n\right)~\mbox{ (by Theorem ~\ref{Cfalp})}\\
&=\frac{r}{1-r}\exp\left(2(1-\alpha)\frac{r}{1-r}\right)\\
&=\frac{1}{2\,e^{1-\alpha}} \leq d(0,\partial H(\ID)) ~\mbox{ (by \eqref{eq-1} and Corollary~\ref{thCLB}).}
\end{split}
\end{equation*}
Similarly, using Theorem ~\ref{Cfalp}, \eqref{eq-2} and Corollary~\ref{thCLB}, we have
\begin{equation*}
\begin{split}
r\exp\left(\sum_{n=1}^{\infty}|b_n|r^n\right)
&\leq r\exp\left(\sum_{n=1}^{\infty}\left(2(1-\alpha)+\frac{2\alpha-1}{n}\right)r^n\right)\\
&=\frac{r}{(1-r)^{2\alpha-1}}\exp\left(2(1-\alpha)\frac{r}{1-r}\right)\\
&\leq d(0,\partial G(\ID)).
\end{split}
\end{equation*}
Furthermore, using Theorem ~\ref{Cfalp}, \eqref{eq-2a} and Corollary~\ref{thCLB}, we have
\begin{equation*}
\begin{split}
r\exp\left(\sum_{n=1}^{\infty}\left(|a_n|+|b_n|\right)r^n\right)
&\leq r\exp\left(\sum_{n=1}^{\infty}\left(4(1-\alpha)+\frac{2\alpha}{n}\right)r^n\right)\\
&=\frac{r}{(1-r)^{2\alpha}}\exp\left((1-\alpha)\frac{4r}{1-r}\right)\\
&\leq d(0,\partial f(\ID)).
\end{split}
\end{equation*}

Finally, it is evident that all radius are attained by suitable rotations of the  log-harmonic right half plane mapping
$f_{\alpha}(z)$, where $f_{\alpha}(z)$ is given by \eqref{Dfalpha}.
\end{proof}

If $\alpha=0$, then Theorem \ref{BohrRf} reduces to Theorem 3 in \cite{Ali2016}. If $\alpha\to1$, then $r_H=r_G=1/3$,
and $r_f=3-2\sqrt{2}$ which is same as Bohr's radius of the subordinating family of univalent functions
(see \cite[Theorem 1]{Muhanna2010}) which we recall for a ready reference below.
\begin{thm}\label{subtheo}
Suppose that $f,g$ are analytic in $\ID$ such that $f$ is univalent in $\ID$ and $g(z)=\sum_{n=0}^{\infty} b_nz^n$
belongs to $S(f)=\{\varphi:\, \varphi \prec f\}$, where $\prec$ denotes the usual subordination. Then inequality
$$\sum_{n=1}^{\infty} |b_n|r^n \leq \dist (f(0),\partial f(\ID))
$$
holds with $r_f=3-2\sqrt{2}\approx 0.17157$. The sharpness of $r_f$ is shown by the Koebe function $f(z)=z/(1-z)^2.$
\end{thm}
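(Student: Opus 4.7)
The plan is to combine the Koebe one-quarter theorem with the Rogosinski coefficient inequality for subordinate functions (a consequence of de Branges's resolution of the Bieberbach conjecture). Write $d=\dist(f(0),\partial f(\ID))$. Since $f$ is univalent, $f'(0)\neq 0$, and the normalized mapping $F(z)=(f(z)-f(0))/f'(0)$ is univalent with $F(0)=0$, $F'(0)=1$. Koebe's $1/4$-theorem gives $F(\ID)\supseteq\{|w|<1/4\}$, which translates to $|a_1|=|f'(0)|\le 4d$, where $a_1$ is the first Taylor coefficient of $f$.

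Next, since $f$ is univalent and $g\prec f$, there is a Schwarz function $\omega$ with $\omega(0)=0$ and $g=f\circ\omega$; in particular $b_0=a_0$, which matches the sum in the statement starting at $n=1$. The Rogosinski coefficient bound (unconditional in view of de Branges's theorem) states that in this situation
\begin{equation*}
|b_n|\le n\,|a_1|\quad\text{for every }n\ge 1.
\end{equation*}
Combining with the Koebe estimate yields $|b_n|\le 4nd$ for all $n\ge 1$.

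Summing the majorant series,
\begin{equation*}
\sum_{n=1}^{\infty}|b_n|r^n\;\le\;4d\sum_{n=1}^{\infty}n r^n\;=\;\frac{4dr}{(1-r)^2}.
\end{equation*}
The desired inequality $\sum_{n=1}^{\infty}|b_n|r^n\le d$ is therefore ensured by $4r\le(1-r)^2$, i.e.\ $r^2-6r+1\ge 0$, whose relevant root in $(0,1)$ is $r_f=3-2\sqrt{2}$. Sharpness is witnessed by the Koebe function $f(z)=z/(1-z)^2=\sum_{n=1}^{\infty}nz^n$ together with the choice $g=f$ (so $\omega(z)=z$): here $a_0=0$, $|a_1|=1$, $d=1/4$, and $\sum_{n=1}^{\infty}nr^n=r/(1-r)^2$ equals exactly $1/4$ at $r=3-2\sqrt{2}$, so Koebe's bound and the de Branges bound are simultaneously attained.

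The principal obstacle is the Rogosinski bound $|b_n|\le n|a_1|$ for functions subordinate to a univalent mapping: for small $n$ it can be established by elementary expansions of $f\circ\omega$ and Schwarz–Pick, but the uniform statement for all $n$ is genuinely deep and relies on de Branges's theorem applied to the normalization $(f-f(0))/f'(0)$. Once this ingredient is accepted, the remainder is an elementary comparison with the extremal Koebe series, so the entire argument hinges on packaging together three classical facts: subordination $\Rightarrow$ existence of a Schwarz $\omega$, Koebe $1/4$ $\Rightarrow$ $|a_1|\le 4d$, and Rogosinski–de Branges $\Rightarrow$ $|b_n|\le n|a_1|$.
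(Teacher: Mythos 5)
Your proof is correct, and it coincides with the standard (indeed the original) argument: the paper itself states this result without proof, quoting it from Abu Muhanna \cite{Muhanna2010} for ready reference, and the proof there is exactly your combination of the Koebe one-quarter theorem (giving $|a_1|\le 4\,\dist(f(0),\partial f(\ID))$), the Rogosinski subordination bound $|b_n|\le n|a_1|$ (unconditional by de Branges, via the Milin--Robertson--Rogosinski chain), and the comparison $\sum_{n\ge1}nr^n=r/(1-r)^2\le 1/4$ for $r\le 3-2\sqrt{2}$. Your sharpness check with $g=f=z/(1-z)^2$, where $d=1/4$ and equality holds precisely at $r=3-2\sqrt{2}$, is also the standard one.
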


\section{The inner mapping radius of log-harmonic mappings}\label{sect4}
In \cite{Liu1}, the authors have proposed the following two conjectures.

\begin{conj}\label{LHBJ}
Let $f(z)=zh(z)\overline{g(z)}\in\mathcal{S}_{Lh}$, where the representation of $h(z)$ and $g(z)$ are given by~\eqref{LHBJC}.
Then for all $n\geq 1$,
\begin{enumerate}
\item $|a_{n}|\leq 2+ \frac{1}{n}$;
\item $|b_{n}|\leq 2- \frac{1}{n}$;
\item $\left|a_{n}-b_{n}\right|\leq \frac{2}{n}$;
\end{enumerate}
\end{conj}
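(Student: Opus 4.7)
The plan is to reduce the three estimates to coefficient inequalities for related analytic functions, with the expected extremal being (a rotation of) the log-harmonic right-half-plane mapping obtained from \eqref{Dfalpha} at $\alpha=0$. Indeed, the identity at the end of the proof of Theorem~\ref{Cfalp} displays
\[
f_{0}(z)=z\exp\Bigl(\sum_{n=1}^{\infty}\bigl(2+\tfrac{1}{n}\bigr)z^{n}\Bigr)
\overline{\exp\Bigl(\sum_{n=1}^{\infty}\bigl(2-\tfrac{1}{n}\bigr)z^{n}\Bigr)},
\]
so all three bounds in Conjecture~\ref{LHBJ} are saturated simultaneously by this single candidate and are already sharp on the subclass $\mathcal{S}^{*}_{Lh}\subset\mathcal{S}_{Lh}$ by Theorem~\ref{Cfalp} with $\alpha=0$.

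First I would address part~(3). Setting $\varphi(z)=zh(z)/g(z)$, the series \eqref{LHBJC} give $\log\bigl(\varphi(z)/z\bigr)=\sum_{n\ge 1}(a_{n}-b_{n})z^{n}$, so $a_{n}-b_{n}=\gamma_{n}(\varphi)$ is the $n$th logarithmic Taylor coefficient of $\varphi$. The sharp bound $|\gamma_{n}|\le 2/n$ is classical on the starlike class $\mathcal{S}^{*}$: it follows from the Carath\'eodory estimate $|p_{n}|\le 2$ applied to the Herglotz function $z\varphi'(z)/\varphi(z)$, combined with the identity $p_{n}=n\gamma_{n}$. Consequently part~(3) would be a consequence of
\[
f=zh\,\overline{g}\ \text{univalent log-harmonic on }\ID
\ \Longrightarrow\ \varphi=zh/g\in\mathcal{S}^{*},
\]
or, failing that, of some subclass of $\mathcal{S}$ containing every such $\varphi$ in which the same sharp log-coefficient bound is available.

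For parts~(1) and~(2) I would first exploit the identity $a_{n}=\gamma_{n}(\varphi)+b_{n}$, which by part~(3) couples the two inequalities: $|a_{n}|\le 2+1/n$ is implied by $|b_{n}|\le 2-1/n$, and vice versa. To settle either directly, I would develop a Schiffer-type boundary variation on $\mathcal{S}_{Lh}$. Namely, for an extremal $f_{*}$ of $\max_{\mathcal{S}_{Lh}}\RE\bigl(e^{-i\theta}b_{n}\bigr)$, I would deform $f_{*}(\ID)$ by a one-parameter family of univalent log-harmonic mappings that cut a short analytic slit from $\partial f_{*}(\ID)$, and match first-order terms to produce a non-negative quadratic differential $Q(w)\,dw^{2}$ on $\IC\setminus f_{*}(\ID)$. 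This should force $f_{*}(\ID)$ to be a one-slit domain; explicit integration of the induced ODE for $f_{*}$ should then match $f_{0}$ up to rotation, after which the coefficients of $h$ and $g$ are read off from the first paragraph.

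The main obstacle, and the reason the statement is still a conjecture, is that univalence of $f=zh\,\overline{g}$ does not in any obvious way transfer to the associated analytic objects $h$, $g$, $zh$, $zg$, or $\varphi=zh/g$; the only currently available route is Theorem~\ref{thmLSS}, which already assumes log-harmonic starlikeness. The decisive new input therefore has to be either a \emph{Bieberbach-type} reduction showing that every extremal function for the linear functionals $a_{n}$, $b_{n}$, or $a_{n}-b_{n}$ on $\mathcal{S}_{Lh}$ lies in $\mathcal{S}^{*}_{Lh}$, or a direct area-theorem argument applied to
\[
\log\bigl(f(z)/z\bigr)=\sum_{n\ge 1}a_{n}z^{n}+\overline{\sum_{n\ge 1}b_{n}z^{n}}
\]
that uses the univalence of $f$ itself. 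As a first concrete step I would settle $n=1$ by Taylor-expanding $f$ to second order at the origin and invoking positivity of the Jacobian on a neighbourhood of $0$, and then verify the conjecture on $\mathcal{S}^{*}_{Lh}$ via Theorem~\ref{Cfalp} with $\alpha=0$ as a consistency check on the shape of the bounds.
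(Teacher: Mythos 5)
You were asked to prove a statement that the paper itself presents as an open problem: Conjecture~\ref{LHBJ} carries no proof in the paper, which only records that the bounds have been verified on the starlike subclass (via \cite[Theorem 3.3]{Abdulhadi1989}, \cite[Theorem 3.3]{Liu1}, and, within this paper, Theorem~\ref{Cfalp} at $\alpha=0$), with sharpness given by \eqref{Dfalpha} at $\alpha=0$. Your proposal honestly reflects this state of affairs, and its solid parts are exactly the known ones: the sharpness check against $f_0$, and the reduction of part (3) to the classical logarithmic-coefficient bound $|\gamma_n|\le 2/n$ for $\varphi(z)=zh(z)/g(z)$, which is valid precisely when $\varphi\in\mathcal{S}^{*}$, i.e., by Theorem~\ref{thmLSS}, when $f\in\mathcal{S}^{*}_{Lh}$. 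The obstacle you name is indeed the crux: univalence of $f=zh\overline{g}$ is not known to transfer any useful property (univalence, starlikeness, membership in a log-coefficient-controlled subclass) to $\varphi$, $h$, or $g$. Your two proposed routes around this --- a Schiffer-type boundary variation adapted to $\mathcal{S}_{Lh}$, or an area-theorem argument on $\log\bigl(f(z)/z\bigr)$ --- are left entirely programmatic: no variational formula for log-harmonic mappings is derived, no quadratic differential is produced, and no area inequality is stated, so the proposal does not constitute a proof, nor does it go beyond what the paper already asserts.

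There is also one concrete logical error worth flagging: the claimed symmetry that (1) and (2) imply each other via $a_n=\gamma_n+b_n$ is one-directional. From $|b_n|\le 2-\tfrac{1}{n}$ and $|\gamma_n|\le \tfrac{2}{n}$ the triangle inequality does give $|a_n|\le 2+\tfrac{1}{n}$, but in the reverse direction $|a_n|\le 2+\tfrac{1}{n}$ and $|\gamma_n|\le \tfrac{2}{n}$ only yield $|b_n|\le 2+\tfrac{3}{n}$, far weaker than $2-\tfrac{1}{n}$; note that at $\alpha=0$ the bound in (2) lies \emph{below} the Carath\'eodory-type level $2$, so (2) cannot be recovered from (1) and (3) and must be attacked directly. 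A minor slip: \eqref{Dfalpha} at $\alpha=0$ is the log-harmonic Koebe function of Section~\ref{sect4} (slit image, tip at $-1/e^{2}$), not the right-half-plane mapping $LR$, though the paper's own wording in Theorem~\ref{BohrRf} is loose on this point.
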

This conjecture has been verified for starlike log-harmonic mappings, see ~\cite[Theorem 3.3]{Abdulhadi1989} and \cite[Theorem~3.3]{Liu1}. The
log-harmonic Koebe function $f_0(z)$ given by \eqref{Dfalpha} gives the sharpness. In \cite[Theorem~3.3]{Liu1}, it was also proposed that
$\{w:\,|w|<1/e^2\}\subseteq f(\ID)$ if $f(z)=zh(z)\overline{g(z)}\in\mathcal{S}_{Lh}$.

\begin{definition}
For $f(z)=zh(z)\overline{g(z)}\in\mathcal{S}_{Lh}$ , the inner mapping radius $\rho_0(f)$ of the domain $f(\ID)$
is to be the real number $\psi'(0)$, where
$\psi$ is the analytic function that maps $\ID$ onto $f(\ID)$ such that  $\psi(0)=0$ and
$\psi'(0)>0$.
\end{definition}


Recall that the tip of the slit of the log-harmonic Koebe function $f_0(z)$ is at $-1/e^2$
while the tip of the slit for the analytic Koebe function $k(z)=\frac{z}{(1-z)^2}$ is at $-1/4$.
Obviously, the images of the unit disk under $\frac{4}{e^2} k(z)$ and under
$f_0(z)$ are the same, i.e.,
$$\frac{4}{e^2}k(\ID)=f_0(\ID).$$
This multiplier factor of $4/e^2$ is the inner mapping radius for $f_0(\ID)$. For other log-harmonic functions in $\mathcal{S}_{Lh}$, the inner mapping
radius may be different. For example, consider log-harmonic right half-plane mapping $LR(z)$ and log-harmonic two-slits mapping $LS(z)$
(see from Examples 2 and  3 in ~\cite{Liu1}) given by
\begin{equation*}
\begin{split}
LR(z)=\frac{z}{1-z}\exp\left(\RE\left(\frac{2z}{1-z}\right)\right)
\end{split}
\end{equation*}
and
\begin{equation*}
\begin{split}
LS(z)&=\frac{z}{1-z^2}|1-z^2|\exp\left(\RE\left(\frac{2z^2}{1-z^2}\right)\right),
\end{split}
\end{equation*}
respectively. The inner mapping radius for $LR(\ID)$ and $LS(\ID)$ is $1/e$, since
$$\frac{1}{e}R(\ID)=LR(\ID)\quad{\rm and}\quad\frac{1}{e}S(\ID)=LS(\ID),$$
where $R(z)=z/(1-z)$ and $S(z)=z/(1-z^2)$ denote the analytic right half-plane mapping and two-slits mapping, respectively.

In the example above $\psi(z) =\frac{4}{e^2} k(z)$, and the inner
mapping radius $\rho_0(f_0)=\frac{4}{e^2}$. In the following example, we show that $\frac{4}{e^2}\leq\rho_0(f)\leq4$ for one slit log-harmonic
mappings $f\in \mathcal{S}_{Lh}$.
\begin{example}
Consider the family of functions $F_{\lambda}(z)=f_{1}^{\lambda}(z)f_{2}^{1-\lambda}(z)~(0\leq\lambda\leq1)$, where \begin{equation*}
f_1(z)=\frac{z}{(1-z)^2}|1-z|^2\quad{\rm and}\quad f_2(z)=\frac{z}{(1-z)^2}|1-z|^2\exp\left(\RE\left(\frac{4z}{1-z}\right)\right).
\end{equation*}

Simple calculations show that $f_1$ and $f_2$ are starlike log-harmonic with dilatations $\mu_1(z)=-z$ and $\mu_2(z)=z$. Also $F_{\lambda}$ is
log-harmonic with  the dilatation
$$\mu(z)=\frac{z[(1-2\lambda)+z]}{1+(1-2\lambda)z}.
$$
It is clear that $|\mu(z)|<1$ for $0\leq\lambda\leq 1$, and therefore $F_{\lambda}$ is sense-preserving in $\ID$. Since the conditions of
Theorem 3 in~\cite{AbdulHadi2014} are satisfied (or see the details in Example 3 in~\cite{AbdulHadi2014}), we thus have that $F_{\lambda}$ is univalent and starlike in $\ID$.
Moreover,
$$F_{\lambda}(z)=f_{1}^{\lambda}(z)f_{2}^{1-\lambda}(z)
=\frac{z}{(1-z)^2}|1-z|^2\exp\left((1-\lambda)\RE\left(\frac{4z}{1-z}\right)\right).$$
Because $k(\ID)$ is $\IC$ minus the slit on the negative real axis from $-1/4$ to $\infty$, and  $F_{\lambda}(\ID)$ is $\IC$ minus
the slit on the negative real axis from $-e^{-2(1-\lambda)}$ to $\infty$, we obtain that for $0 < \lambda < 1$,
$$\frac{4}{e^2}\leq\rho_0(F_{\lambda})\leq 4.$$
\end{example}


\begin{problem}
Show that the inner mapping radius of log-harmonic mapping $f\in\mathcal{S}_{Lh}$ satisfy
$$\frac{4}{e^2}\leq\rho_0(f)\leq 4;$$
or else find a class of log-harmonic mappings $f\in\mathcal{S}_{Lh}$ such that either
$\rho_0(f)> 4$ or $\rho_0(f)<\frac{4}{e^2}.$
\end{problem}

\section{Pre-Schwarzian derivatives and log-harmonic mappings}\label{sect3}

In this section, we introduce pre-Schwarzian, Schwarzian derivatives and log-harmonic Bloch function for non-vanishing
log-harmonic mappings analogous to analytic and harmonic mappings.
%

The pre-Schwarzian and Schwarzian derivatives of a locally univalent analytic function $h$ are given (cf. \cite{Duren1983}) by
\begin{equation*}
Ph(z)=\frac{h''(z)}{h'(z)}\quad{\rm and}\quad Sh(z)=\left(\frac{h''(z)}{h'(z)}\right)'-\frac{1}{2}\left(\frac{h''(z)}{h'(z)}\right)^2,
\end{equation*}
respectively. These notions for complex valued harmonic mappings was presented by  Chuaqui et al. \cite{Chuaqui2003} and investigated by
a number of authors. See \cite{Chuaqui2007,Chuaqui2008,Hernandez2015,ChenSh2017} and the references therein.
In \cite{Mao2013},  Mao and  Ponnusamy investigated the Schwarzian derivative of log-harmonic mappings, and they obtained several necessary and sufficient
conditions for Schwarzian derivative $S_f$ to be analytic. In this paper, we modify the definitions of pre-Schwarzian $P_f$ and Schwarzian $S_f$ derivatives
for the sense-preserving univalent log-harmonic mappings and notice that the new definitions preserve the standard properties of the classical Schwarzian derivative and
they are given in the following way:
\begin{equation*}
\begin{split}
P_f(z)&=\left(\log J_f\right)_{z}=\left(\frac{h''(z)}{h'(z)}-\frac{h'(z)}{h(z)}\right)-\frac{\overline{\mu(z)}\mu'(z)}{1-|\mu(z)|^2},\\
S_f(z)&=\left(P_f(z)\right)'-\frac{1}{2}\left(P_f(z)\right)^2\\
&=\left(\frac{h''(z)}{h'(z)}-\frac{h'(z)}{h(z)}\right)'
-\frac{1}{2}\left(\frac{h''(z)}{h'(z)}-\frac{h'(z)}{h(z)}\right)^2
+\left(\frac{h''(z)}{h'(z)}-\frac{h'(z)}{h(z)}\right)\frac{\overline{\mu(z)}\mu'(z)}{1-|\mu(z)|^2}\\
&\qquad -\frac{\overline{\mu(z)}\mu''(z)}{1-|\mu(z)|^2}
-\frac{3}{2}\left(\frac{\overline{\mu(z)}\mu'(z)}{1-|\mu(z)|^2}\right)^2,
\end{split}
\end{equation*}
where
$$J_f(z)=\left|\frac{h'(z)}{h(z)}\right|^2(1-|\mu(z)|^2)~\mbox{ and }~\mu(z)=\frac{|g'(z)/g(z)|}{|h'(z)/h(z)|}
$$
are the Jacobian of log-harmonic mapping $f$ and the dilatation of $f$, respectively. The pre-Schwarzian and Schwarzian derivatives of log-harmonic mappings
have the chain rule property exactly in the same form as in the analytic case: if $f$ is a sense- preserving log-harmonic mapping and
$\varphi$ is a locally univalent analytic function for which the composition $f\circ \varphi$ is defined, then a straightforward calculation shows that
$$P_{f\circ\,\varphi}(z)=\left(P_f\circ\varphi(z)\right)\cdot \varphi'(z)+P_{\varphi}(z)
~\mbox{ and }
S_{f\circ\,\varphi}(z)=\left(S_f\circ\varphi(z)\right)\cdot (\varphi'(z))^2+S_{\varphi}(z).
$$


If we assume that the pre-Schwarzian derivative $P_f$ of a log-harmonic mapping $f=h\overline{g}$ with dilatation $\mu(z)$ is analytic,
then we get that
\begin{equation*}
\frac{\partial P_f}{\partial \overline{z}}=\frac{|\mu'(z)|^2}{(1-|\mu(z)|^2)^2}=0 \quad {(z\in\ID)},
\end{equation*}
which implies that $\mu(z)$ is constant. In other words, $P_f$ is analytic if and only if the dilatation of $f$ is constant. Actually,
we get the following more general result.

\begin{theorem}\label{PLHD}
Suppose that $f(z)=h(z)\overline{g(z)}$ is a sense-preserving log-harmonic mapping in $\ID$. Then pre-Schwarzian derivative $P_f$ of $f(z)$ is harmonic
if and only if the dilatation $\mu(z)$ of $f(z)$ is constant.
\end{theorem}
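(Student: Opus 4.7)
The plan is to exploit the decomposition $P_f(z) = A(z) + N(z)$, where
\[
A(z)=\frac{h''(z)}{h'(z)}-\frac{h'(z)}{h(z)}
\]
is holomorphic on $\ID$ and
\[
N(z)=-\frac{\overline{\mu(z)}\,\mu'(z)}{1-|\mu(z)|^2}
\]
carries all of the anti-holomorphic dependence. Because $A$ is analytic and therefore harmonic, $P_f$ is harmonic on $\ID$ if and only if $\partial^{2} N/(\partial z\,\partial \overline z)$ vanishes identically. The forward direction of the equivalence is then immediate: if $\mu$ is constant, then $\mu'\equiv 0$, so $N\equiv 0$ and $P_f\equiv A$ is holomorphic on $\ID$, and in particular harmonic.

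For the converse, the first step is to compute $\partial N/\partial \overline z$. Using the holomorphy of $\mu$ (so that $\partial \mu'/\partial\overline z=0$ and $\partial\overline\mu/\partial\overline z=\overline{\mu'}$), a direct quotient-rule calculation reproduces the expression already noted in the paragraph preceding the theorem,
\[
\frac{\partial N}{\partial \overline z}=-\frac{|\mu'(z)|^2}{(1-|\mu(z)|^2)^2}.
\]
Applying $\partial/\partial z$ to this and simplifying with $\partial(1-|\mu|^2)/\partial z=-\mu'\overline\mu$ then yields
\[
\frac{\partial^{2} N}{\partial z\,\partial \overline z}=-\frac{\overline{\mu'(z)}\bigl[\mu''(z)(1-|\mu(z)|^2)+2(\mu'(z))^{2}\overline{\mu(z)}\bigr]}{(1-|\mu(z)|^{2})^{3}}.
\]
The harmonicity hypothesis thus forces this numerator to vanish throughout $\ID$.

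To conclude I argue by contradiction: suppose $\mu$ is non-constant. Then the analytic function $\mu'$ has only isolated zeros, so on the open dense set $U=\{z\in\ID:\mu'(z)\ne 0\}$ we obtain
\[
\mu''(z)(1-|\mu(z)|^{2})+2(\mu'(z))^{2}\overline{\mu(z)}=0. \qquad (\ast)
\]
Applying $\partial/\partial\overline z$ to $(\ast)$ and using that $\mu,\mu',\mu''$ are holomorphic gives $\overline{\mu'(z)}\bigl[2(\mu'(z))^{2}-\mu''(z)\mu(z)\bigr]=0$, whence $\mu''\mu=2(\mu')^{2}$ on $U$. Substituting the resulting identity $\mu''|\mu|^{2}=2(\mu')^{2}\overline\mu$ back into $(\ast)$ causes the non-analytic pieces to cancel, leaving $\mu''(z)=0$ on $U$ and therefore on all of $\ID$ by the identity theorem. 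Thus $\mu(z)=az+b$ is affine; reinserting into $(\ast)$ produces $2a^{2}(\overline a\,\overline z+\overline b)\equiv 0$, which forces $a=0$ and contradicts the non-constancy of $\mu$. Hence $\mu$ must be constant.

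The only obstacle I anticipate is bookkeeping: correctly computing the mixed second derivative of $N$ and verifying the cancellation that collapses $(\ast)$ to $\mu''=0$. Once $(\ast)$ is in hand, the remaining algebraic cascade is short and self-simplifying, and the identity theorem does the global extension essentially for free.
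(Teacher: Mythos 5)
Your proof is correct and takes essentially the same route as the paper: you compute the same mixed derivative (your expression for $\partial^2 P_f/\partial z\partial\overline z$ agrees with the paper's \eqref{PLf} up to an overall sign, which is immaterial once it is set to zero) and deduce that harmonicity forces $\overline{\mu'}\left[\mu''(1-|\mu|^2)+2\mu'^2\,\overline{\mu}\right]\equiv 0$. Your closing cascade --- differentiating $(\ast)$ in $\overline z$ to get $\mu''\mu=2\mu'^2$, hence $\mu''\equiv 0$ and then $a=0$ --- is simply a fully spelled-out version of the paper's terse step, where the identity $\mu''/\mu'^2=-2\overline{\mu}/(1-|\mu|^2)$ equates an analytic function with one whose $\overline z$-derivative is $-2\overline{\mu'}/(1-|\mu|^2)^2$, forcing $\mu'\equiv 0$.
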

\begin{proof}
By a straightforward calculation, we obtain
\begin{equation}\label{PLf}
\frac{\partial^2 P_f}{\partial z\partial \overline{z}}=\frac{\overline{\mu'}\left(\mu''(1-|\mu|^2)+2\mu'^2\,\overline{\mu}\right)}
{(1-|\mu|^2)^3}.
\end{equation}

If $\mu$ is constant, then it is clear that $\Delta P_f\equiv0$ and so $P_f(z)$ is harmonic in $\ID$.

Now we assume that $P_f(z)$ is harmonic. By \eqref{PLf}, we get
\begin{equation*}
\overline{\mu'}\left(\mu''(1-|\mu|^2)+2\mu'^2\,\overline{\mu}\right)=0.
\end{equation*}
If $\mu$ is not constant, then the last relation reduced to
\begin{equation*}
\frac{\mu''}{\mu'^2}=-\frac{2\overline{\mu}}{1-|\mu|^2},
\end{equation*}
which is analytic in $\ID$.
Thus, we see that  $\mu$ is a constant which contradicts our assumption. The proof is complete.
\end{proof}

\section{Log-harmonic Bloch space}\label{sect6}
\begin{definition}\label{dfLHBF}
A non-vanishing log-harmonic mapping $f(z)=h(z)\overline{g(z)}$ in $\ID$ is said to be a \emph{log-harmonic Bloch function} if
$$\beta(f)=\sup_{z\in\ID}(1-|z|^2)\left(\left|\frac{h'(z)}{h(z)}\right|
+\left|\frac{g'(z)}{g(z)}\right|\right)<+\infty,$$
where $h$ and $g$ are analytic in $\ID$,
$$h(z)=\exp\left(\sum_{n=0}^{\infty}a_n z^n\right)\quad {\rm and }\quad g(z)=\exp\left(\sum_{n=1}^{\infty}b_n z^n\right).$$
The space of all log-harmonic Bloch functions is denoted by $\mathcal{B}_{Lh}$.
\end{definition}

The space $\mathcal{B}_{Lh}$ forms a complex Banach space with the norm $\|\cdot\|_{\mathcal{B}_{Lh}}$ given by~(see~\cite{Colonna1989})
\begin{equation*}
\begin{split}
\|f\|_{\mathcal{B}_{Lh}}&=|f(0)|+\sup_{z\in\ID}(1-|z|^2)\left(\left|\frac{h'(z)}{h(z)}\right|
+\left|\frac{g'(z)}{g(z)}\right|\right)\\
&=|f(0)|+\sup_{z\in\ID}(1-|z|^2)\left|\frac{h'(z)}{h(z)}\right|\left(1+\left|\mu(z)\right|\right).
\end{split}
\end{equation*}
We refer it as the \emph {log-harmonic Bloch norm} and the elements of the log-harmonic Bloch space are called log-harmonic Bloch functions.

Now we will show that $\mathcal{B}_{Lh}$ has the affine and linear invariance. To do this, we let
$$\mathcal{\phi}_{\alpha}(z)=\frac{z+\alpha}{1+\overline{\alpha}z},\quad z\in\ID,
$$
where $|\alpha|<1$.

\begin{proposition}
If $f(z)=h(z)\overline{g(z)}\in\mathcal{B}_{Lh}$, then
\begin{enumerate}
\item[{\rm(i)}] $f^{a}\overline{f}^{b}\in \mathcal{B}_{Lh}$ for any $a,b\in\mathbb{C}$\, {\rm(affine invariance)}
\item[{\rm(ii)}] $f\circ \mathcal{\phi}_{\alpha}\in \mathcal{B}_{Lh}$ for any $\alpha\in \ID$\, {\rm(linear invariance).}
\end{enumerate}
\end{proposition}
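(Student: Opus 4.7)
The plan is to derive both invariance statements directly from Definition \ref{dfLHBF}, using the multiplicative structure of the non-vanishing factors $h,g$ for part (i) and the standard Schwarz--Pick invariance of the disk for part (ii).

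For the affine invariance (i), I first rewrite $F:=f^{a}\overline{f}^{b}$ in the canonical log-harmonic form $\tilde h\,\overline{\tilde g}$. Since $h$ and $g$ are non-vanishing and given as exponentials on $\ID$, the complex powers $h^{a},h^{b},g^{a},g^{b}$ are well-defined non-vanishing analytic functions, and a direct rearrangement gives
$$F = h^{a}\,\overline{g}^{a}\cdot \overline{h}^{b}g^{b} = (h^{a}g^{b})\,\overline{(h^{b}g^{a})},$$
so I take $\tilde h = h^{a}g^{b}$ and $\tilde g = h^{b}g^{a}$, both again of exponential type. The logarithmic derivatives satisfy
$$\frac{\tilde h'}{\tilde h} = a\frac{h'}{h} + b\frac{g'}{g}, \qquad \frac{\tilde g'}{\tilde g} = b\frac{h'}{h} + a\frac{g'}{g},$$
and the triangle inequality immediately yields $\beta(F)\leq (|a|+|b|)\,\beta(f)<\infty$; since $|F(0)|=|f(0)|^{a}\,\overline{|f(0)|}^{\,b}$ is finite, $F\in\mathcal{B}_{Lh}$.

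For the linear invariance (ii), I set $F:=f\circ\phi_{\alpha} = (h\circ\phi_{\alpha})\,\overline{(g\circ\phi_{\alpha})}$, which already has the required form with non-vanishing analytic factors. Writing $w=\phi_{\alpha}(z)$, the chain rule gives $(h\circ\phi_{\alpha})'(z)/(h\circ\phi_{\alpha})(z) = (h'(w)/h(w))\,\phi_{\alpha}'(z)$, and similarly for $g$. The key analytic input is the Schwarz--Pick identity
$$(1-|z|^{2})\,|\phi_{\alpha}'(z)| = 1-|\phi_{\alpha}(z)|^{2},$$
which is immediate from $\phi_{\alpha}'(z)=(1-|\alpha|^{2})/(1+\overline{\alpha}z)^{2}$. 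After substitution, the quantity $(1-|z|^{2})(|(h\circ\phi_{\alpha})'/(h\circ\phi_{\alpha})| + |(g\circ\phi_{\alpha})'/(g\circ\phi_{\alpha})|)$ becomes $(1-|w|^{2})(|h'(w)/h(w)| + |g'(w)/g(w)|)$; since $\phi_{\alpha}$ is an automorphism of $\ID$, taking the supremum over $z\in\ID$ is the same as taking it over $w\in\ID$, and we obtain $\beta(F)=\beta(f)<\infty$.

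No substantial obstacle is expected: (i) reduces to linearity of logarithmic differentiation combined with the triangle inequality, while (ii) is the classical Möbius invariance of the hyperbolic seminorm. The only minor bookkeeping point is that the canonical normalization $g(0)=1$ from Definition \ref{dfLHBF} need not be preserved under either operation; this is harmless because $\beta$ depends only on the logarithmic derivatives $\tilde h'/\tilde h$ and $\tilde g'/\tilde g$, which are invariant under multiplication of $\tilde h$ or $\tilde g$ by any nonzero constant, so one may re-normalize freely without affecting the seminorm.
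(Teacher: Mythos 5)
Your argument follows the same route as the paper's proof: in (i) you re-decompose $F=f^{a}\overline{f}^{b}$ into analytic and co-analytic factors, take logarithmic derivatives, and apply the triangle inequality to get $\beta(F)\leq(|a|+|b|)\beta(f)$; in (ii) you use the chain rule together with the identity $(1-|z|^{2})|\phi_{\alpha}'(z)|=1-|\phi_{\alpha}(z)|^{2}$ and re-parametrize the supremum over the automorphism, obtaining $\beta(F)=\beta(f)$, exactly as in the paper. There is, however, one algebraic slip in (i): for complex exponents, conjugation does not commute with powers the way you wrote it. Since $\overline{g}^{\,a}=\overline{g^{\overline{a}}}$ and $\overline{h}^{\,b}=\overline{h^{\overline{b}}}$, the correct decomposition is $F=\bigl(h^{a}g^{b}\bigr)\,\overline{\bigl(h^{\overline{b}}g^{\overline{a}}\bigr)}$, which is what the paper uses; your factorization $F=(h^{a}g^{b})\,\overline{(h^{b}g^{a})}$ is valid only when $a,b$ are real. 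The slip is harmless for the estimate, because the true co-analytic logarithmic derivative is $\overline{b}\,h'/h+\overline{a}\,g'/g$, whose modulus obeys the same triangle-inequality bound $|b|\,|h'/h|+|a|\,|g'/g|$, so the conclusion $\beta(F)\leq(|a|+|b|)\beta(f)<\infty$ stands; still, the exponents should be corrected. Your closing remark about re-normalizing so that the co-analytic factor equals $1$ at the origin, and why this does not affect $\beta$, is a sensible bookkeeping point that the paper leaves implicit.
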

\begin{proof}
For the proof of (i), we let $f=h\overline{g}$, and consider
$$F=f^{a}\overline{f}^{b}=h^{a}g^{b}\overline{h^{\overline{b}}g^{\overline{a}}}.
$$
Elementary computations  give
\begin{equation*}
\begin{split}
\beta(F)&=\sup_{z\in\ID}(1-|z|^2)\left(\left|\frac{\left(h^{a}(z)g^{b}(z)\right)'}{h^{a}(z)g^{b}(z)}\right|
+\left|\frac{\big(h^{\overline{b}}(z)g^{\overline{a}}(z)\big)'}{h^{\overline{b}}(z)g^{\overline{a}}(z)}\right|\right)\\
&=\sup_{z\in\ID}(1-|z|^2)\left(\left|a\frac{h'(z)}{h(z)}+b\frac{g'(z)}{g(z)}\right|
+\left|\overline{b}\frac{h'(z)}{h(z)}+\overline{a}\frac{g'(z)}{g(z)}\right|\right)\\
&\leq
\left(|a|+|b|\right)\beta(f)<+\infty.
\end{split}
\end{equation*}
By Definition \ref{dfLHBF}, the desired assertion follows.

For the proof of (ii), we write $F=f\circ \mathcal{\phi}_{\alpha}=H\overline{G}$ so that
\begin{equation*}
\frac{H'(z)}{H(z)}=\frac{h'\left(\mathcal{\phi}_{\alpha}(z)\right)}{h\left(\mathcal{\phi}_{\alpha}(z)\right)}
\cdot\frac{1-|\alpha|^2}{(1+\overline{\alpha}z)^2}\quad{\rm and}\quad
\frac{G'(z)}{G(z)}=\frac{g'\left(\mathcal{\phi}_{\alpha}(z)\right)}{g\left(\mathcal{\phi}_{\alpha}(z)\right)}
\cdot\frac{1-|\alpha|^2}{(1+\overline{\alpha}z)^2}.
\end{equation*}
Consequently,
\begin{equation*}
\begin{split}
\beta(F)
&=\sup_{z\in\ID}\frac{(1-|z|^2)(1-|\alpha|^2)}{|1+\overline{\alpha}z|^2}
\left(\left|\frac{h'\left(\mathcal{\phi}_{\alpha}(z)\right)}{h\left(\mathcal{\phi}_{\alpha}(z)\right)}\right|+
\left|\frac{g'\left(\mathcal{\phi}_{\alpha}(z)\right)}{g\left(\mathcal{\phi}_{\alpha}(z)\right)}\right|\right)\\
&=\sup_{z\in\ID}(1-|\mathcal{\phi}_{\alpha}(z)|^2)
\left(\left|\frac{h'\left(\mathcal{\phi}_{\alpha}(z)\right)}{h\left(\mathcal{\phi}_{\alpha}(z)\right)}\right|+
\left|\frac{g'\left(\mathcal{\phi}_{\alpha}(z)\right)}{g\left(\mathcal{\phi}_{\alpha}(z)\right)}\right|\right),
\end{split}
\end{equation*}
which gives that $\beta(F)=\beta(f)$. The proof is complete.
\end{proof}

 \vskip.20in

\begin{center}{\sc Acknowledgements}
\end{center}

\vskip.05in
The work of the second author is supported  in part by Mathematical Research Impact Centric Support
(MATRICS) grant, File No.: MTR/2017/000367, by the Science and Engineering Research Board (SERB),
Department of Science and Technology (DST), Government of India. The work of Zhihong Liu
was completed during his visit to Indian Statistical Institute, Chennai Centre.
The authors would like to thank the referees for their valuable suggestions and comments which essentially improved the quality of this paper.




\end{document}